\newtheorem{theorem}{Theorem}
\newtheorem{lemma}{Lemma}
\newtheorem{corollary}{Corollary}
\newtheorem{definition}{Definition}
\newenvironment*{proof}{\paragraph{Proof.}}{\hfill $\square$}
\newenvironment*{remark}{\paragraph{Remark.}}{\\}
\title{Quantum Algorithm for Matrix Logarithm by Integral Formula}
\author{Songling Zhang \footnotemark[2] \and Hua Xiang\footnotemark[2] \footnotemark[3] \footnotemark[4]}
\begin{document}
\date{}
\maketitle
\renewcommand{\thefootnote}{\fnsymbol{footnote}}
\footnotetext[2]{School of Mathematics and Statistics, Wuhan University, Wuhan 430072, China.}
\footnotetext[3]{Hubei Key Laboratory of Computational Science, Wuhan University, Wuhan 430072, China.}
\footnotetext[4]{Corresponding author. E-mail address: {\tt hxiang@whu.edu.cn}.}

\begin{abstract}
The matrix logarithm is one of the important matrix functions. Recently, a quantum algorithm that computes the state $|f\rangle$ corresponding to matrix-vector product $f(A)b$ is proposed in [Takahira, et al. Quantum algorithm for
matrix functions by Cauchy's integral formula, QIC, Vol.20, No.1\&2, pp.14-36, 2020]. However, it can not be applied to matrix logarithm. In this paper, we propose a quantum algorithm, which uses LCU method and block-encoding technique as subroutines, to compute the state $|f\rangle = \log(A)|b\rangle / \|\log(A)|b\rangle\|$ corresponding to $\log(A)b$ via the integral representation of $\log(A)$ and the Gauss-Legendre quadrature rule.
\par \textbf{Key words.} matrix logarithm, quantum algorithm, Gauss-Legendre
\end{abstract}

\section{Introduction}
Many quantum algorithms have been proposed and exhibit great advantages over their counterparts. For example, Harrow, Hassidim, and Lloyd proposed a well-known quantum algorithm, called HHL algorithm \cite{HHL}, for the quantum linear systems problem (QLSP). However, HHL algorithm has complexity $ \text{poly}(1/\epsilon)$ because of the use of phase estimation, where $ \epsilon $ is the precision parameter. Childs, Kothari and Somma used the method of linear combination of unitaries (LCU) to circumvent the limitations of phase estimation and obtain an improved algorithm \cite{LCU}, which exponentially improves the dependence on the parameter $\epsilon$, with complexity $ \text{poly}(\log (1/\epsilon))$ for QLSP. The LCU method is widely used for Hamiltonian simulation. As a simple example, consider implementing the operator $ A = \alpha_0 U_0 + \alpha_1 U_1 $, where $U_0$ and $U_1$ are unitaries, and $\alpha_i > 0 \ (i = 0, 1)$ without loss of generality. Consider a unitary $G_\alpha$ such that
$$ |0\rangle \mapsto \frac{1}{\sqrt{\alpha}} \left( \sqrt{\alpha_0}|0\rangle + \sqrt{\alpha_1}|1\rangle \right)
\ \text{and} \ \ |1\rangle \mapsto \frac{1}{\sqrt{\alpha}} \left( \sqrt{\alpha_0}|1\rangle - \sqrt{\alpha_1}|0\rangle \right), $$
where $\alpha = \alpha_0 + \alpha_1$. To implement $A$ on a state $|\psi\rangle$, let $U=|0\rangle \langle 0| \otimes U_0 + |1\rangle \langle 1| \otimes U_1$, then
\begin{equation*}
    \begin{aligned}
    |0\rangle |\psi\rangle & \xrightarrow{\ G_\alpha \otimes I \ } \frac{1}{\sqrt{\alpha}} \left(\sqrt{\alpha_0} |0\rangle + \sqrt{\alpha_1}|1\rangle \right)|\psi\rangle \\
    & \xrightarrow{\quad U \quad } \frac{1}{\sqrt{\alpha}} \left(\sqrt{\alpha_0} |0\rangle U_0 |\psi\rangle + \sqrt{\alpha_1} |1\rangle U_1 |\psi\rangle \right) \\
    & \xrightarrow{\ G^\dagger_\alpha \otimes I \ } \frac{1}{\alpha} \left( |0\rangle (\alpha_0 U_0 + \alpha_1 U_1) |\psi\rangle + \sqrt{\alpha_0 \alpha_1} |1\rangle (U_1 - U_0) |\psi\rangle \right).
    \end{aligned}
\end{equation*}
Measure the ancilla qubit and get the result 0, then we will have the desired state $A|\psi\rangle / \|A|\psi\rangle\| $. Furthermore, LCU method can be also used for implementing matrix functions $f(A)$. Recently, Takahira et al \cite{tosu20} proposed a quantum algorithm, which makes use of the contour integral of the matrix function $f(A)$, to compute the state $|f\rangle = f(A)|b\rangle / \| f(A)|b\rangle \|$ using HHL algorithm or LCU method as a subroutine. However, this quantum algorithm can be only applied when the contour is a circle centered at the origin, and cannot be applied to a function that has a singular at the origin, such as matrix logarithm \cite{tosu21}.

Matrix functions play an important role in many applications \cite{matrix-function}. For example, matrix exponential is related to the problem of Gibbs' state preparation \cite{gibbs}. Tong et al \cite{f-exp} recently constructed the block-encoding form of the matrix exponential $e^{-\beta H}$ for a positive semi-definite Hamiltonian $H \in \mathbb{C}^{N \times N}$ with $N=2^n$, using contour integral of exponential function and Gauss-Legendre quadrature. The contour integral representation for exponential function reads
$$ e^{-\beta x}=\frac{1}{2\pi \text{i}}\oint_\Gamma \frac{e^{-\beta z}}{z-x} dz, $$
where $x \geq 0$, and then we can express the matrix exponential as
$$ e^{-\beta H} = \frac{1}{2\pi \text{i}}\oint_\Gamma e^{-\beta z} (z-H)^{-1} dz, $$
where the contour they chosen is $ \Gamma = \{t^2 - a + \text{i}t \in \mathbb{C}: t \in \mathbb{R}\} $, where $a = 2b(1-b)$, $b=\min(1/2\beta, 1/6)$. Then we get the approximation as the form
$$ e^{-\beta H} \approx \sum_{j \in [J]} \omega_j(z_j I - H)^{-1} $$
by truncating the contour on a finite interval $t \in [-T, T]$, and applying the Gauss-Legendre quadrature formula \cite{f-exp}. Then the block-encoding of the approximation is given as
$$ \sum_{j \in [J]} \omega_j(z_j I - H)^{-1} = (\langle c| \otimes I_n) \left( \sum_{j \in [J]} |j\rangle \langle j| \otimes (z_j - H)^{-1} \right) (|c'\rangle \otimes I_n), $$
where $|c\rangle = \sum_j \sqrt{|\omega_j|} |j\rangle / \sum_j |\omega_j|$, $|c'\rangle = \sum_j \sqrt{|\omega_j|} e^{i\theta_j} |j\rangle / \sum_j |\omega_j|$, and $\theta_j$ satisfies $\omega_j = |\omega_j|e^{i\theta_j}$. Furthermore, Takahira et al \cite{tosu21} considered quantum algorithms based on the block-encoding framework for more general matrix functions.

In this work, we consider a quantum algorithm to compute the state $ |f\rangle = \log(A) |b\rangle / \| \log(A) |b\rangle \| $ corresponding to $\log(A) b$. Any non-singular square matrix $A$ has a logarithm; that is, there exists a matrix $X$ such that $e^X = A$. According to Theorem 1.31 of \cite{matrix-function}, when $A$ have no eigenvalues on $\mathbb{R}^-$, there is a unique logarithm $X$ of $A$ whose eigenvalues all lie in the strip $\{z: -\pi < \text{Im}(z) < \pi\}$. This is the principal logarithm denoted by $\log(A)$. In this paper, we are only interested in the principal logarithm. More properties of the matrix logarithm and classical implementations are given in \cite{matrix-function, Wouk, gl-pade, pade-error}.

In the remainder of this section, we give some necessary notations and definitions.
A matrix is $d$-sparse if there are at most $d$ non-zero entries in any row or column. For a $d$-sparse matrix $A \in \mathbb{C}^{N\times N}$, we assume that there exits an oracle $\mathcal{P}_A$ consisting of oracle $O_A$ and oracle $O_\nu$. Specifically, the oracle $O_A$ is the unitary operator that returns the matrix entries for given positions, that is,
\begin{equation} \label{O-A}
O_A |i,j,z\rangle = |i,j,z\oplus A_{ij}\rangle,
\end{equation}
for $i,j \in \{1,2,\cdots,N\}$, where $A_{ij}$ is the binary representation of the $(i,j)$ entry of $A$. And the oracle $O_\nu$ is the unitary operator that returns the locations of non-zero entries, that is,
\begin{equation} \label{O-v}
    O_\nu |j,l\rangle = |j,\nu(j,l)\rangle,
\end{equation}
for $j \in \{1,2,\cdots,N\}$ and $l \in \{1,2,\cdots,d\}$, where $\nu(j,l)$ is a function that returns the row index of the $l$-th non-zero entry of the $j$-th column. For simplicity, we assume that $N = 2^n$, where $n$ is a positive integer. For a given vector $b = (b_1, b_2, \cdots, b_N)^T \in \mathbb{C}^N$, we assume that there is an oracle $\mathcal{P}_b$ that prepares the state
$$ |b\rangle := \frac{\sum_{i=1}^{N} b_i |i\rangle}{\|\sum_{i=1}^{N} b_i |i\rangle\|} $$
corresponding to $b$ in time $O(\log N)$, that is,
\begin{equation} \label{Pb}
    \mathcal{P}_b |0^n\rangle = |b\rangle.
\end{equation}
In this paper, $\| \cdot \|$ represents 2-norm $\| \cdot \|_2$ for a vector or an operator.

Formally, we define the QLSP \cite{LCU} as follows.

\begin{definition} \label{qlsp}
(Quantum Linear Systems Problem)
Let $A \in \mathbb{C}^{N \times N}$ be a $d$-sparse Hermitian matrix such that $\|A\| \leq 1$. Let $b \in \mathbb{C}^N$. Suppose that there exits an oracle $\mathcal{P}_A$ consisting of $O_A$ and $O_\nu$ in Eq.(\ref{O-A}) and (\ref{O-v}), and an oracle $\mathcal{P}_b$ in Eq.(\ref{Pb}). For linear system $Ax = b$, we define the state $|x\rangle$ as
$$ |x\rangle := \frac{\sum_{i=1}^{N} x_i |i\rangle}{\| \sum_{i=1}^{N} x_i |i\rangle \|}, $$
where $x_i$ is the $i$-th entry of $x$. Then, the goal of the problem is to output a state $|\tilde{x}\rangle$ such that $\| |x\rangle - |\tilde{x}\rangle \| \leq \epsilon$ with a probability of at least $1/2$, where $0 \leq \epsilon \leq 1/2$.
\end{definition}

If $A$ is not Hermitian, consider the following linear system
\begin{equation}
    \begin{bmatrix}
        0 &A \\
        A^\dagger &0 \\
    \end{bmatrix}
    \begin{bmatrix}
        0 \\ x \\
    \end{bmatrix}
    =
    \begin{bmatrix}
        b \\ 0 \\
    \end{bmatrix}
\end{equation}
instead of the original linear system $Ax=b$. So we can always assume that the matrix $A$ is Hermitian without loss of generality.

For the problem in Definition \ref{qlsp}, there are two well-known quantum algorithms, i.e., HHL algorithm \cite{HHL} and LCU method \cite{LCU}, which can be used to solve it. In this paper, we use LCU method as subroutine. The following theorem about LCU method is well-known.
\begin{theorem} \label{HHL-LCU}
    (\cite{LCU}, Theorem 4)
    The problem defined in Definition \ref{qlsp} can be solved using
    $$ O\left( d \kappa_A^2 \log^2 \left(\frac{d \kappa_A}{\epsilon}\right) \right) $$
    queries to oracle $\mathcal{P}_A$ and
    $$ O\left( \kappa_A \log\left( \frac{d \kappa_A}{\epsilon}\right) \right) $$
    uses of  $\ \mathcal{P}_b$, with gate complexity
    $$ O\left( d \kappa_A^2 \log^2\left( \frac{d \kappa_A}{\epsilon} \right) \left[ \log N + \log^{\frac{5}{2}} \left( \frac{d \kappa_A}{\epsilon}\right) \right] \right), $$
    where $\kappa_A = \|A\| \|A^{-1}\|$ is the condition number of matrix $A$.
\end{theorem}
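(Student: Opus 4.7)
The plan is to realise $A^{-1}|b\rangle$ (up to normalisation) through the LCU template sketched in the introduction, and then boost its success amplitude to a constant by amplitude amplification. Concretely I would (i) write $A^{-1}$ as a short linear combination $\sum_{m}\alpha_m U_m$ of unitaries built from sparse Hamiltonian simulation of $A$; (ii) run $G_\alpha$ together with the multiplexed $U=\sum_m|m\rangle\langle m|\otimes U_m$ on the input $|0\rangle\otimes\mathcal{P}_b|0^n\rangle$ exactly as in the two-unitary warm-up, and (iii) apply amplitude amplification on the event ``ancilla measured in $|0\rangle$''.

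Step (i) is the heart of the argument. Since $A$ is Hermitian with $\|A\|\leq 1$ and $\|A^{-1}\|\leq\kappa_A$, its spectrum lies in $[-1,-1/\kappa_A]\cup[1/\kappa_A,1]$, so it suffices to approximate $1/x$ uniformly on that gapped set. Using a Fourier/Gaussian integral representation of $1/x$ together with a two-variable quadrature (the content of Lemmas~9--10 of \cite{LCU}), one produces coefficients $\alpha_m$ and times $t_m$ such that
$$ \Bigl\| A^{-1} - \sum_m \alpha_m e^{i t_m A} \Bigr\| \leq \frac{\epsilon}{\kappa_A}, $$
with both the number of terms and $\max_m|t_m|$ of order $O(\kappa_A\log(\kappa_A/\epsilon))$, and $\sum_m|\alpha_m|=O(\kappa_A)$. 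Each $e^{it_m A}$ is then realised by a near-optimal $d$-sparse Hamiltonian simulation using $O(d|t_m|)$ queries to $\mathcal{P}_A$, up to polylogarithmic factors in the per-unitary error tolerance $\epsilon/(M\kappa_A)$.

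Combining these pieces: a single pass of the LCU circuit costs $O(d\kappa_A\log^2(d\kappa_A/\epsilon))$ queries to $\mathcal{P}_A$ and one call to $\mathcal{P}_b$; the $|0\rangle$-branch has amplitude $\Theta(1/\kappa_A)$, so $O(\kappa_A)$ rounds of amplitude amplification are needed to reach success probability $\geq 1/2$ while preserving $\epsilon$-closeness to $|x\rangle$. Multiplying through yields $O(\kappa_A\log(d\kappa_A/\epsilon))$ uses of $\mathcal{P}_b$ and $O(d\kappa_A^2\log^2(d\kappa_A/\epsilon))$ queries to $\mathcal{P}_A$, exactly as claimed. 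The gate count picks up an extra $\log N$ from the sparse-access circuitry and the $\log^{5/2}$ overhead from the optimal sparse Hamiltonian simulation step, producing the stated gate complexity.

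The main obstacle is step (i): constructing one trigonometric approximation of $1/x$ on the gapped spectrum that is simultaneously short, uses bounded evolution times, and has small $\ell^1$-weight, because each of these three quantities feeds a different line of the final bound. A self-contained derivation requires careful joint control of a tail truncation and a step-size error in the underlying Gaussian integral representation of $1/x$; I would import this directly from Lemmas~9--10 of \cite{LCU} rather than re-derive it, since sparse Hamiltonian simulation and amplitude amplification, the remaining ingredients, are by now black-box subroutines.
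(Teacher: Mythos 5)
This statement is imported verbatim from Childs--Kothari--Somma (Theorem~4 of \cite{LCU}); the paper gives no proof of its own, so the only meaningful comparison is against the cited source. Your sketch captures the right three-part skeleton (decompose $A^{-1}$ as an LCU, implement the unitaries from sparse access, amplify), but the specific route you describe --- the Fourier/Gaussian integral representation of $1/x$ realised through sparse Hamiltonian simulation of $e^{it_mA}$ --- is the proof of CKS's \emph{Theorem~3}, not of the Theorem~4 whose bounds are quoted here. Theorem~4 is obtained from a different decomposition: $1/x$ is approximated on the gapped set $[-1,-1/\kappa_A]\cup[1/\kappa_A,1]$ by a truncated linear combination of Chebyshev polynomials $T_j(x)$, and each $T_j(A/d)$ is implemented exactly by $j$ steps of a Szegedy-type quantum walk built from $\mathcal{P}_A$. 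That is where the clean $\log^2(d\kappa_A/\epsilon)$ in the query count and the $\bigl[\log N+\log^{5/2}(d\kappa_A/\epsilon)\bigr]$ structure of the gate count (the per-walk-step state-preparation cost) come from.

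The concrete gap in your accounting is the $\ell^1$-weight. For the discretised Gaussian integral one gets $\sum_m|\alpha_m|=\Theta\bigl(\kappa_A\sqrt{\log(\kappa_A/\epsilon)}\bigr)$, not $O(\kappa_A)$ as you assert; since the success amplitude of the $|0\rangle$-branch is $\|A^{-1}|b\rangle\|/\sum_m|\alpha_m|$, the number of amplitude-amplification rounds is $O\bigl(\kappa_A\sqrt{\log(\kappa_A/\epsilon)}\bigr)$, and the final query complexity of the Fourier route picks up an extra $\sqrt{\log(\kappa_A/\epsilon)}$ relative to the statement. So the pieces you assemble do not ``multiply through to exactly the claimed bounds''; they prove a slightly weaker (by polylog factors) version. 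If you want the bounds as stated, you need the Chebyshev/quantum-walk construction; otherwise you should weaken the claim to the Theorem-3-type complexity. Either way, for the purposes of this paper the result is used as a black box, and citing \cite{LCU} is the intended ``proof''.
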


In Section 2, we will present the approximation of the matrix logarithm by its integral formula and analyze the approximation error. In Section 3, we will give the framework of the quantum algorithm. In Section 4 and 5, we will analyze the proposed quantum algorithm in error estimation and success probability respectively. In Section 6, we will consider how to apply the LCU method to the new linear system related to the original one, and estimate the total complexity of this algorithm. Finally, we will draw conclusions in Section 7.

\section{Approximation by integral formula}
In this section, we will introduce the integral representation of the matrix logarithm given by Wouk \cite{Wouk}, and the approximation using the Gauss-Legendre quadrature.

We first give the definition of principle logarithm as follows.

\begin{definition}
    (\cite{matrix-function}, Theorem 1.31)
    Let $A \in \mathbb{C}^{N \times N}$ have no eigenvalues on $ \mathbb{R}^- $. There is a unique logarithm $X$ of $A$, whose eigenvalues all lie in the strip $\{z: -\pi < \text{Im}(z) < \pi\}$. We refer to $X$ as the principal logarithm of A and write $X = \log(A)$.
\end{definition}

According to Wouk \cite{Wouk}, we have the integral representation of the matrix logarithm $\log(A)$ in the following.

\begin{theorem} \label{integral-representation}
(\cite{matrix-function}, Theorem 11.1) For $A \in \mathbb{C}^{N \times N}$ with no eigenvalues on $\mathbb{R}^-$,
\begin{equation} \label{logA}
\log(A) = \int_{0}^{1} (A-I)[t(A-I)+I]^{-1} dt.
\end{equation}
\end{theorem}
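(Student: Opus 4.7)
The plan is to reduce the identity to its scalar analogue and then lift via the standard machinery of primary matrix functions. First I would verify the scalar version: for any $x \in \mathbb{C} \setminus \mathbb{R}^-$, the substitution $u = t(x-1) + 1$ gives
$$\int_0^1 \frac{x-1}{t(x-1)+1}\, dt = \int_1^x \frac{du}{u} = \log(x),$$
where the contour from $1$ to $x$ is the straight segment, which avoids $\mathbb{R}^-$ precisely because $x \notin \mathbb{R}^-$ (if $x$ is non-real the segment has nonzero imaginary part off the endpoints, and if $x>0$ the segment stays in the positive reals). This handles the scalar case and simultaneously identifies the integrand as the derivative $(d/dt)\log(t(x-1)+1)$.

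Next I would transfer the scalar identity to matrices. The key observation is that $B(t) := t(A-I) + I$ and $B'(t) = A-I$ are both polynomials in $A$, hence commute, so the primary-matrix-function calculus yields the chain rule
$$\frac{d}{dt}\log B(t) = B(t)^{-1} B'(t) = (A-I)[t(A-I)+I]^{-1}.$$
Integrating from $0$ to $1$ and using $\log B(0) = \log I = 0$ gives $\log(A) = \log B(1) - \log B(0)$, which is exactly the claimed formula. To make this rigorous without invoking heavy machinery, I would pass to the Jordan form $A = ZJZ^{-1}$, apply the identity on each Jordan block $J_k = \lambda_k I + N_k$, and use the fact that $B(t)$, $B(t)^{-1}$, and $\log B(t)$ all respect the block structure and inherit commutativity with $A-I$, so the componentwise computation reduces to differentiating the scalar function $\log(t(\lambda_k - 1) + 1)$ together with the finitely many derivative terms coming from the nilpotent $N_k$.

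The main obstacle is the well-definedness of $[t(A-I)+I]^{-1}$ and of the principal logarithm $\log B(t)$ for every $t \in [0,1]$, since the definition of $\log$ requires the spectrum to avoid $\mathbb{R}^-$. I would verify this by computing that the eigenvalues of $B(t)$ are exactly $t\lambda_k + (1-t)$ for $\lambda_k \in \sigma(A)$, and checking case by case: if $\lambda_k$ has nonzero imaginary part then $\mathrm{Im}(t\lambda_k + (1-t)) = t\,\mathrm{Im}(\lambda_k)$ vanishes only at $t=0$, where the value is $1$; if $\lambda_k > 0$ then $t\lambda_k + (1-t) > 0$ throughout $[0,1]$. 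Thus $B(t)$ is invertible with spectrum off $\mathbb{R}^-$, the principal logarithm is analytic along the path $\{B(t)\}_{t\in[0,1]}$, and the integrand in \eqref{logA} is continuous in $t$, so Fubini/Lebesgue differentiation justifies the interchange of integration and the spectral representation.

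Finally, I would note that the same argument gives an alternative derivation via the Cauchy integral $\log(A) = \frac{1}{2\pi i}\oint_{\Gamma} \log(z)(zI - A)^{-1}\, dz$ on a contour $\Gamma$ avoiding $\mathbb{R}^-$: swapping the order of integration after writing $\log(z) = \int_0^1 (z-1)[t(z-1)+1]^{-1}\, dt$ and applying the resolvent identity recovers \eqref{logA}. This viewpoint would be useful later when connecting the integral representation to the Gauss-Legendre quadrature framework used elsewhere in the paper, but for the theorem itself the commuting chain-rule argument above is the most direct route.
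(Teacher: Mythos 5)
The paper itself offers no proof of this theorem: it is imported verbatim as Higham, Theorem~11.1 (originally due to Wouk), so there is no internal argument to compare yours against, and you should simply be judged on your own derivation — which is correct and is essentially the standard textbook proof: establish the scalar identity $\int_0^1 (x-1)[t(x-1)+1]^{-1}\,dt=\log x$ on $\mathbb{C}\setminus\mathbb{R}^-$ by the substitution $u=t(x-1)+1$ along the segment from $1$ to $x$, check via the spectral mapping that $\sigma(t(A-I)+I)=\{t\lambda+(1-t):\lambda\in\sigma(A)\}$ stays off $\mathbb{R}^-$ for every $t\in[0,1]$, and lift the scalar identity to a primary matrix function statement. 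Two remarks on where the weight of the rigor actually sits. The headline step $\frac{d}{dt}\log B(t)=B(t)^{-1}B'(t)$ is true here but not free: it presupposes differentiability of $t\mapsto\log B(t)$ and a commuting chain rule, and it is really your Jordan-form fallback that does the work — on a block $\lambda I+N$ both sides of \eqref{logA} are upper triangular Toeplitz matrices whose superdiagonal entries are the $x$-derivatives at $\lambda$ of $\log x$ and of $\int_0^1(x-1)[t(x-1)+1]^{-1}\,dt$ respectively, and differentiation under the integral sign is legitimate because $t(x-1)+1$ is bounded away from $0$ uniformly for $t\in[0,1]$ and $x$ near $\lambda$, so the block identity reduces to the scalar one. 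Alternatively, the Cauchy-integral route you sketch at the end is arguably the cleanest single-stroke justification: choosing a contour $\Gamma\subset\mathbb{C}\setminus\mathbb{R}^-$ enclosing $\sigma(A)$ (possible since $\mathbb{C}\setminus\mathbb{R}^-$ is open and simply connected and the poles $(t-1)/t$ of the integrand all lie on $\mathbb{R}^-$), the integrand is jointly continuous on $\Gamma\times[0,1]$, and Fubini yields the identity without ever differentiating the matrix logarithm along the path. Either version closes the argument; as proposed, your proof is sound and matches the standard proof of the cited result.
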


For simplicity, we define
\begin{equation}
    h(t) := [t(A-I)+I]^{-1},
\end{equation}
then $ \log(A) = (A-I)\int_{0}^{1} h(t) dt $.
Next, we apply the Gauss-Legendre quadrature formula to discretize this integral. This leads to
\begin{equation}
    \begin{aligned}
    \log(A) &= (A-I)\int_{0}^{1} h(t) dt \\
    & \simeq (A-I) \cdot \frac{1}{2} \sum_{j=1}^{M} s_j h\left(\frac{1}{2} t_j + \frac{1}{2}\right) \\
    & = \sum_{j=1}^{M} \omega_j (A-I) h(\tau_j) \\
    & := f_M(A),
    \end{aligned}
\end{equation}
where $ \tau_j = \frac{1}{2} t_j + \frac{1}{2}, \ \omega_j = \frac{1}{2} s_j, $
and $t_j, s_j$ are the nodes and weights of $M$-point Gauss-Legendre quadrature respectively. In the above approximation, we define
\begin{equation} \label{logMA}
    f_M(A) = (A-I)\sum_{j=1}^{M} \omega_j [\tau_j (A-I) + I]^{-1}.
\end{equation}

In the following, we bound the error of this approximation. Under the assumption $\|A-I\| < 1$, Lemma \ref{error-pade} gives the error bound of matrix logarithm and its Pad\'{e} approximation. Lemma \ref{pade-gl} shows the relation of $M$-point Gauss-Legendre quadrature $f_M(A)$ and the Pad\'{e} approximation to $\log(A)$.

\begin{lemma} (\cite{pade-error}) \label{error-pade}
    Let $r_m(x)$ be the $(m,m)$ Pad\'{e} approximation to $\log(1+x)$, the numerator and denominator of which are polynomials in $x$ of degree $m$. For a given matrix $X \in \mathbb{C}^{N \times N}$ with $\|X\|<1$, we have the bound
    $$ \|r_m(X) - \log(I+X)\| \leq | r_m(-\|X\|) - \log(1-\|X\|)|. $$
\end{lemma}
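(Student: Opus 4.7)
The plan is to expand the scalar difference $r_m(x) - \log(1+x)$ as a Maclaurin series and then transfer the bound to the matrix setting by operator-norm submultiplicativity. By the defining property of the $(m,m)$ Padé approximant, $r_m(x) - \log(1+x) = O(x^{2m+1})$, so
\[ r_m(x) - \log(1+x) = \sum_{k \geq 2m+1} c_k \, x^k, \]
and this series converges for $|x| < 1$: the Maclaurin series of $\log(1+x)$ has radius of convergence $1$, and the poles of $r_m$ are reciprocals of the zeros of its denominator, which for the diagonal Padé approximant of $\log(1+x)$ lie outside the closed unit disk.

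The crucial technical step is a uniform sign property of the Taylor coefficients: $(-1)^k c_k$ must all share a common sign, so that at $x = -s$ with $s \in (0,1)$ every term $c_k (-s)^k$ contributes the same sign. One clean route uses the partial-fraction/quadrature representation
\[ r_m(x) = \sum_{j=1}^{m} \frac{\alpha_j \, x}{1 + \beta_j \, x}, \]
where $\alpha_j > 0$ and $\beta_j \in (0,1)$ arise from the $m$-point Gauss--Legendre rule on $[0,1]$; expanding the geometric series and comparing with $\log(1+x) = \int_0^1 x/(1+tx)\,dt$ identifies $c_k$ with the Gauss--Legendre quadrature error on the monomial $t^{k-1}$, which has a definite sign for $k \geq 2m+1$ by the standard error formula.

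Once the sign property is in place, the matrix estimate is immediate. The series $\sum_k c_k X^k$ converges in operator norm whenever $\|X\| < 1$, and the triangle inequality combined with submultiplicativity of $\|\cdot\|$ gives
\[ \|r_m(X) - \log(I+X)\| \leq \sum_{k \geq 2m+1} |c_k| \, \|X\|^k = \Bigl| \sum_{k \geq 2m+1} c_k (-\|X\|)^k \Bigr| = \bigl| r_m(-\|X\|) - \log(1-\|X\|) \bigr|, \]
where the middle equality uses the uniform sign pattern to pull a global sign out of the series. This gives the claimed bound.

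The main obstacle is verifying the sign property of the Taylor coefficients $c_k$; this cannot be read off from the Maclaurin expansions of $r_m$ and $\log(1+x)$ separately, and relies either on the integral remainder formula for diagonal Padé approximants of the logarithm or on the Gauss--Legendre error analysis sketched above. Everything else is essentially bookkeeping, and this sign ingredient is what is imported from the Kenney--Laub reference \cite{pade-error}.
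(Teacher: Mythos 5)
The paper does not prove this lemma---it is imported verbatim from Kenney and Laub \cite{pade-error}---so there is no in-paper proof to compare against; your argument is essentially the one in that reference, and it is correct. The crux you isolate is exactly right: writing $r_m(x)=\sum_{j=1}^m \alpha_j x/(1+\beta_j x)$ with Gauss--Legendre weights $\alpha_j>0$ and nodes $\beta_j\in(0,1)$ (which is Lemma~\ref{pade-gl}), the coefficient of $x^k$ in $\log(1+x)-r_m(x)$ is $(-1)^{k-1}E_{k-1}$ where $E_{k-1}$ is the quadrature error on $t^{k-1}$ over $[0,1]$, and the standard error formula $E[f]=\frac{f^{(2m)}(\xi)}{(2m)!}\int_0^1\prod_j(t-\beta_j)^2\,dt$ gives $E_{k-1}\ge 0$ because $\frac{d^{2m}}{dt^{2m}}t^{k-1}\ge 0$ on $[0,1]$ for $k\ge 2m+1$; this is the sign uniformity you need, and the remaining steps (convergence for $\|X\|<1$ since the poles $-1/\beta_j$ lie outside the closed unit disk, triangle inequality, submultiplicativity) are routine as you say.
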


\begin{lemma} (\cite{gl-pade}, Theorem 4.3) \label{pade-gl}
    Let $\| A-I \| < 1$. Then $ f_M(A) $, the $M$-point Gauss-Legendre quadrature rule for $\log(A)$, is the $(M,M)$ Pad\'{e} approximation to $\log(A)$.
\end{lemma}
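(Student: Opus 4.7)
The plan is to reduce the statement to a scalar identity in $x$, where $A-I$ plays the role of $X=xI$, and then prove that identity by exploiting the exactness of Gauss--Legendre quadrature together with the uniqueness of the Pad\'e approximation. Since $\|A-I\|<1$ guarantees the convergence of the Neumann series for $[t(A-I)+I]^{-1}$ on $[0,1]$, both $f_M(A)$ and $r_M(A-I)$ are rational functions in $A-I$ defined by ordinary functional calculus, so it suffices to establish the identity between the corresponding scalar rational functions and then substitute $X=A-I$.

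First I would write the scalar analogue of $f_M$ as
$$ f_M(1+x)=x\sum_{j=1}^{M}\frac{\omega_j}{1+\tau_j x}, $$
and observe that, after combining over the common denominator $\prod_{j=1}^M(1+\tau_j x)$, this is a rational function of type at most $(M,M)$ with value $0$ at $x=0$ and denominator normalized to $1$ at $x=0$. Next, using the geometric-series expansion $(1+tx)^{-1}=\sum_{k\ge 0}(-1)^k t^k x^k$, valid for $|x|<1/\max_j\tau_j$, I would get the Taylor expansion
$$ f_M(1+x)=\sum_{k\ge 0}(-1)^k x^{k+1}\sum_{j=1}^{M}\omega_j\tau_j^k, $$
and, similarly, $\log(1+x)=\sum_{k\ge 0}(-1)^k x^{k+1}\int_{0}^{1}t^k\,dt$. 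The standard exactness property of the $M$-point Gauss--Legendre rule on $[0,1]$ gives $\sum_{j=1}^M\omega_j\tau_j^k=\int_0^1 t^k\,dt$ for every $0\le k\le 2M-1$, so the two Taylor series agree through the coefficient of $x^{2M}$, i.e.
$$ f_M(1+x)-\log(1+x)=O(x^{2M+1}). $$

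Combining the two observations, $f_M(1+x)$ is a rational function of type $(M,M)$ whose Taylor expansion matches that of $\log(1+x)$ to order $2M$; by uniqueness of the $(M,M)$ Pad\'e approximation to $\log(1+x)$, this forces $f_M(1+x)=r_M(x)$ as rational functions. Substituting $X=A-I$ (whose spectral radius is strictly less than $1$, so no denominator vanishes and the power series representations converge) then yields $f_M(A)=r_M(A-I)$, which is exactly the claim.

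The main obstacle is the Taylor-matching step: one must be careful that the ``degree of exactness $2M-1$'' of Gauss--Legendre on $[0,1]$ translates into agreement of the Taylor coefficients of the two scalar functions through precisely the order $2M$ that the $(M,M)$ Pad\'e approximation requires, and that the rational function $f_M(1+x)$ indeed has type exactly $(M,M)$ with the correct normalization so that uniqueness of the Pad\'e approximation applies. The reduction from matrices to scalars, once $\|A-I\|<1$ is invoked, and the passage back via functional calculus are routine.
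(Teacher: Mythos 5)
The paper does not actually prove this lemma --- it is quoted verbatim from Dieci, Morini and Papini (Theorem~4.3 of the cited reference \cite{gl-pade}), so there is no in-paper proof to compare against. Your argument is correct and is essentially the classical proof from that source: write $\log(1+x)=\int_0^1 x(1+tx)^{-1}\,dt$, observe that the $M$-point rule applied to the integrand produces a rational function of type $(M,M)$ with denominator $\prod_j(1+\tau_j x)$ nonvanishing at $0$, use exactness of Gauss--Legendre for polynomials of degree at most $2M-1$ (preserved under the affine transplant to $[0,1]$) to match the Taylor coefficients of $\log(1+x)$ through order $x^{2M}$, and conclude by uniqueness of the Pad\'{e} approximant; the passage to matrices under $\|A-I\|<1$ is routine functional calculus, as you say. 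The only step worth spelling out is the uniqueness: if two type-$(M,M)$ rational functions with denominators equal to $1$ at the origin both agree with $\log(1+x)$ to order $O(x^{2M+1})$, their difference is a ratio whose numerator is a polynomial of degree at most $2M$ vanishing to order $2M+1$, hence identically zero --- this cleanly identifies $f_M(1+x)$ with $r_M(x)$ and closes the argument.
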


From Lemma \ref{error-pade} and Lemma \ref{pade-gl}, we obtain the error bound of $\log(A)$ in Eq.(\ref{logA}) and its approximation $f_M(A)$ in Eq.(\ref{logMA}).

\begin{theorem} \label{th-3}
    Let $ \|A-I\| < 1$, then for matrix logarithm $\log(A)$ in Eq.(\ref{logA}) and approximation $f_M(A)$ in Eq.(\ref{logMA}), we have a bound
    \begin{equation} \label{fM-error}
    \|f_M(A)-\log(A)\| \leq |r_M(-\|A-I\|) - \log(1-\|A-I\|)|:=\epsilon_{M,A},
    \end{equation}
    where $r_M(x)$ is the $(M,M)$ Pad\'{e} approximation to $\log(1+x)$.
\end{theorem}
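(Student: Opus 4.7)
The plan is a direct composition of the two lemmas stated immediately above. Lemma~\ref{error-pade} gives a scalar bound for the error of a Pad\'{e} approximant applied to a matrix argument, and Lemma~\ref{pade-gl} identifies the $M$-point Gauss-Legendre quadrature $f_M(A)$ with the $(M,M)$ Pad\'{e} approximant to $\log(A)$. Composing these two results, after a trivial change of variable, should yield the claim without any genuinely analytic work.

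In more detail, I would first set $X := A - I$, so that $\log(A) = \log(I + X)$ and the hypothesis $\|A - I\| < 1$ becomes $\|X\| < 1$, which is exactly what Lemma~\ref{error-pade} needs. By the very definition of Pad\'{e} approximation, the $(M,M)$ Pad\'{e} approximant $r_M(x)$ to $\log(1+x)$, evaluated at the matrix $X = A - I$, is the $(M,M)$ Pad\'{e} approximant to $\log(I + (A-I)) = \log(A)$. Lemma~\ref{pade-gl} then tells us that this Pad\'{e} approximant is precisely $f_M(A)$, giving the identification $f_M(A) = r_M(A - I)$.

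Substituting this identification into the inequality of Lemma~\ref{error-pade}, with $X = A - I$, would yield
\begin{equation*}
\|f_M(A) - \log(A)\| = \|r_M(A-I) - \log(I + (A-I))\| \leq |r_M(-\|A-I\|) - \log(1 - \|A-I\|)|,
\end{equation*}
which is exactly the claimed bound $\epsilon_{M,A}$.

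I do not expect any real analytic obstacle. The only point that warrants a careful sentence is a bookkeeping one, namely verifying that ``the $(M,M)$ Pad\'{e} approximant'' refers to the same rational function in both lemmas: Lemma~\ref{error-pade} phrases it through $\log(1+x)$ with argument $X$, while Lemma~\ref{pade-gl} phrases it through $\log(A)$ directly, and the shift $X = A - I$ reconciles the two descriptions. Once that identification is in place, the theorem is just a one-line substitution.
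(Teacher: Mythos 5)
Your proposal is correct and follows exactly the same route as the paper's own proof: set $X = A - I$, use Lemma~\ref{pade-gl} to identify $f_M(A)$ with the $(M,M)$ Pad\'{e} approximant $r_M(A-I)$, and then apply Lemma~\ref{error-pade} to obtain the bound. Your extra remark about reconciling the two phrasings of ``the $(M,M)$ Pad\'{e} approximant'' is a reasonable bookkeeping check that the paper leaves implicit.
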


\begin{proof}
    Let $X = A-I$. From Lemma \ref{pade-gl}, we have that $f_M(A)$ is the $(M,M)$ diagonal Pad\'{e} approximation $r_M(X)$, that is, $r_M(X) = f_M(A)$. Then using Lemma \ref{error-pade}, it's easy to obtain Eq.(\ref{fM-error}).
\end{proof}

\section{Quantum algorithm} \label{algorithm}
In this section, we describe the framework of the quantum algorithm to compute state $|f\rangle$, using the above $M$-point Gauss-Legendre quadrature $f_M(A)$.

The original goal is to output state
\begin{equation}
    |f\rangle = \frac{\log(A) |b\rangle}{\| \log(A)|b\rangle \|}.
\end{equation}
Now, using the approximation $ f_M(A) $, we consider constructing a quantum algorithm that outputs state
\begin{equation}
    |f_M\rangle:=\frac{f_M(A)|b\rangle}{\| f_M(A)|b\rangle \|}
\end{equation}
instead of $|f\rangle$.

We define
$$ h_M(A) := \sum_{j=1}^{M} \omega_j [\tau_j (A-I) + I]^{-1}, $$
then
$$ f_M(A)b = (A-I) h_M(A) b, $$
therefore we first consider the state
\begin{equation}
|h_M\rangle := \frac{h_M(A)|b\rangle}{\| h_M(A)|b\rangle \|}.
\end{equation}
As we can see in the following,
$$ h_M(A) b = \sum_{j=1}^{M} \omega_j [\tau_j (A-I) + I]^{-1} b = \sum_{j=1}^{M} \omega_j x^{(j)}, $$
where $ x^{(j)} = [\tau_j (A-I) + I]^{-1} b, \ j \in \{1,\cdots,M\} $.
Then, with the definitions of $h_M(A)$ and $|h_M\rangle$, we have
\begin{equation}
\begin{aligned}
    |f_M\rangle & = \frac{f_M(A)|b\rangle}{\| f_M(A)|b\rangle \|}
    = \frac{(A-I)h_M(A)|b\rangle}{\| (A-I)h_M(A)|b\rangle \|} \\
    & = \frac{\|h_M(A) |b\rangle \| \cdot (A-I) |h_M\rangle}{\|h_M(A) |b\rangle \| \cdot \|(A-I) |h_M\rangle\|} \\
    & = \frac{(A-I)|h_M\rangle}{\|(A-I) |h_M\rangle\|}.
\end{aligned}
\end{equation}

We provide a brief description of the quantum algorithm that outputs state $|f_M\rangle$. Details and analysis of the method are discussed in the next several sections.

\begin{enumerate} [\textbf{Step 1.}]
    \item
    Applying the HHL algorithm \cite{HHL} or LCU method \cite{LCU} to obtain quantum state $|\boldsymbol{x}\rangle$ corresponding to solution $\boldsymbol{x} \in \mathbb{C}^{NM}$ of linear system
    \begin{equation}
    \boldsymbol{A}\boldsymbol{x} = \boldsymbol{b},
    \end{equation}
    where $\boldsymbol{A} \in \mathbb{C}^{NM \times NM} $ is a block diagonal matrix defined as
    \begin{equation} \label{new-A}
    \boldsymbol{A} :=
    \begin{pmatrix}
    \tau_1 (A-I)+I & & & \\
    &\tau_2 (A-I)+I & & \\
    & &\ddots & \\
    & & &\tau_M (A-I)+I \\
    \end{pmatrix}
    \end{equation}
    and
    \begin{equation}
    \boldsymbol{x} :=
    \begin{pmatrix}
    x^{(1)} \\
    x^{(2)} \\
    \vdots \\
    x^{(M)} \\
    \end{pmatrix},\quad
    \boldsymbol{b} :=
    \begin{pmatrix}
    b\\
    b\\
    \vdots \\
    b\\
    \end{pmatrix}
    =
    \begin{pmatrix}
    1\\ 1\\ \vdots\\ 1\\
    \end{pmatrix} \otimes b.
    \end{equation}
    Then we have
    \begin{equation}
    |\boldsymbol{x}\rangle = \sum_{j=1}^{M}\frac{\|x^{(j)}\|}{\|\boldsymbol{x}\|} |j\rangle |x_j\rangle = \sum_{j=1}^{M} p_j |j\rangle |x_j\rangle,
    \end{equation}
    where $ |x_j\rangle = \sum_i x_i^{(j)} |i\rangle / \| \sum_i x_i^{(j)} |i\rangle \| $, $ p_j = \|x^{(j)}\| / \|\boldsymbol{x}\| $, and $ x_i^{(j)} $ stands for the $i$-th component of $ x^{(j)} $, $j \in \{1,\cdots, M\}$.
\end{enumerate}

\begin{enumerate}[\textbf{Step 2.}]
    \item
    Add an ancilla qubit and perform a controlled rotation on $|j\rangle$ such that
    \begin{equation}
    \begin{aligned}
    \sum_{j=1}^{M} p_j |j\rangle |x_j\rangle |0\rangle & \mapsto \sum_{j=1}^{M} p_j |j\rangle |x_j\rangle
    \left( C_M \omega_j |0\rangle + \sqrt{1-|C_M \omega_j|^2} |1\rangle \right) \\
    & = C_M \sum_{j=1}^{M} p_j \omega_j |j\rangle |x_j\rangle |0\rangle + |\Phi_1^\perp \rangle \\
    & := |\psi_1\rangle, \\
    \end{aligned}
    \end{equation}
    where $C_M$ is defined by
    \begin{equation}
    C_M := \frac{1}{\max_j \omega_j},
    \end{equation}
    and $|\Phi_1^\perp \rangle$ satisfies $(I_M \otimes I_N \otimes |0\rangle \langle 0|) |\Phi_1^\perp \rangle = 0$.
\end{enumerate}

\begin{enumerate}[\textbf{Step 3.}]
    \item
    For simplicity, let $M = 2^m$, where $m$ is a positive integer. Applying the Hadamard gates $H^{\otimes m} \otimes I_N \otimes I_2$ to $|\psi_1\rangle$, then we have
    \begin{equation} \label{step3-res}
    \begin{aligned}
    &\quad \ (H^{\otimes m} \otimes I_N \otimes I_2)
    \left( C_M \sum_{j=1}^{M} p_j \omega_j |j\rangle |x_j\rangle |0\rangle + |\Phi_1^\perp \rangle \right) \\
    & = \frac{C_M}{\sqrt{M}} \sum_{j=1}^{M} p_j \omega_j |0^m\rangle |x_j\rangle |0\rangle + |\Phi_2^\perp \rangle \\
    & = |0^m\rangle \otimes \frac{C_M}{\sqrt{M}} \sum_{j=1}^{M} \frac{\| x^{(j)} \|}{\| \boldsymbol{x}\|} \omega_j \sum_{i=1}^{N} \frac{x_i^{(j)}}{\| x^{(j)} \|} |i\rangle \otimes |0\rangle + |\Phi_2^\perp \rangle \\
    & = |0^m\rangle \otimes \frac{C_M}{\sqrt{M} \|\boldsymbol{x}\|} \sum_{i=1}^{N} \left( \sum_{j=1}^{M} \omega_j x_i^{(j)} \right)|i\rangle \otimes |0\rangle + |\Phi_2^\perp \rangle \\
    & = |0^m\rangle \otimes \frac{C_M \|h_M(A)b \|}{\sqrt{M} \|\boldsymbol{x}\|} |h_M\rangle \otimes |0\rangle + |\Phi_2^\perp \rangle \\
    & := |\psi_2\rangle, \\
    \end{aligned}
    \end{equation}
    where $|\Phi_2^\perp \rangle$ satisfies $(|0^m\rangle \langle 0^m| \otimes I_N \otimes |0\rangle \langle 0|)|\Phi_2^\perp \rangle = 0$.
\end{enumerate}

\begin{enumerate}[\textbf{Step 4.}]
    \item
    Suppose the block-encoding form of $A/\alpha$ as
    $$ U_A = \begin{pmatrix}
    A/\alpha & *  \\
    * & * \\
    \end{pmatrix} \in \mathbb{C}^{2^a N \times 2^a N}, $$
    which satisfies $ (\langle 0^a| \otimes I_N) U_A (|0^a\rangle \otimes I_N) = A/\alpha $, where $\alpha$ is a positive scalar that satisfies $ \|A/\alpha\| \leq 1 $, $ a \in \mathbb{N} $. Obviously, for all states $|\varphi\rangle \in \mathbb{C}^N$,
    $$ U_A |0^a\rangle |\varphi\rangle = \frac{1}{\alpha} |0^a\rangle A |\varphi\rangle + |\Psi_1^\perp\rangle,
    $$
    where $|\Psi_1^\perp\rangle$ satisfies $(|0^a\rangle \langle 0^a| \otimes I_N) |\Psi_1^\perp\rangle = 0$. Then we consider how to construct the block-encoding of $A-I$ based on $U_A$. Let
    $$ V = |0\rangle \langle 0| \otimes U_A - |1\rangle \langle 1| \otimes I_{2^a N}, $$
    then for all states $|\phi\rangle \in \mathbb{C}^{2^a N}$,
    \begin{equation}
    \begin{aligned}
        |0\rangle |\phi\rangle & \xmapsto{\ G \otimes I_{2^a N} \ } \frac{1}{\sqrt{1 + \alpha}} (\sqrt{\alpha}|0\rangle + |1\rangle) |\phi\rangle \\
        & \xmapsto{\quad  V \quad} \frac{1}{\sqrt{1 + \alpha}} (\sqrt{\alpha}|0\rangle U_A |\phi\rangle - |1\rangle |\phi\rangle) \\
        & \xmapsto{\ G^\dagger \otimes I_{2^a N} \ } \frac{1}{1 + \alpha} \left( |0\rangle (\alpha U_A - I_{2^a N}) |\phi\rangle - \sqrt{\alpha}|1\rangle (U_A + I_{2^a N}) |\phi\rangle \right),
    \end{aligned}
    \end{equation}
    where $G$ is the unitary operator such that
    $$ G|0\rangle = \frac{1}{\sqrt{1 + \alpha}} (\sqrt{\alpha} |0\rangle + |1\rangle),\quad G|1\rangle = \frac{1}{\sqrt{1 + \alpha}} (\sqrt{\alpha}|1\rangle - |0\rangle), $$
    that is,
    $$ G = \frac{1}{\sqrt{1 + \alpha}} \begin{pmatrix}
    \sqrt{\alpha} & -1  \\
    1 & \sqrt{\alpha} \\
    \end{pmatrix}. $$
    Let
    \begin{equation}
        U = (G^\dagger \otimes I_{2^a N}) V (G \otimes I_{2^a N}),
    \end{equation}
    and it is the block-encoding of $U_A - \frac{1}{\alpha} I_{2^a N}$. It's easy to prove that
    $$
    \begin{aligned}
        & \quad \ (\langle 0^a| \otimes I_N) (\langle 0| \otimes I_{2^a N}) U (|0\rangle \otimes I_{2^a N}) (|0^a\rangle \otimes I_N) \\
        & = (\langle 0^a| \otimes I_N) \frac{\alpha}{1 + \alpha} (U_A - \frac{1}{\alpha} I_{2^a N}) (|0^a\rangle \otimes I_N) \\
        & = \frac{1}{1 + \alpha}(A-I),
    \end{aligned}
    $$
    i.e., $(1 + \alpha) (\langle 0^{a+1}| \otimes I_N) U (|0^{a+1}\rangle \otimes I_N) =  A-I $.
    Consider the case $|\phi\rangle = |0^a\rangle |\varphi\rangle$ as
    follows.
    \begin{equation}
    \begin{aligned}
        U |0\rangle |\phi\rangle
        & = \frac{1}{1 + \alpha} \bigg( |0\rangle (\alpha U_A - I_{2^a N}) |\phi\rangle - \sqrt{\alpha}|1\rangle (U_A + I_{2^a N}) |\phi\rangle \bigg) \\
        & = \frac{1}{1 + \alpha} \bigg( |0\rangle (\alpha U_A - I_{2^a N}) |0^a\rangle |\varphi\rangle - \sqrt{\alpha}|1\rangle (U_A + I_{2^a N}) |0^a\rangle |\varphi\rangle \bigg) \\
        & = \frac{1}{1 + \alpha} |0\rangle \bigg(|0^a\rangle (A-I) |\varphi\rangle) + \alpha |\Psi_1^\perp\rangle \bigg) - \frac{\sqrt{\alpha}}{1 + \alpha} |1\rangle \left(|0^a\rangle (\frac{1}{\alpha} A + I) |\varphi\rangle + |\Psi_1^\perp\rangle \right) \\
        & = \frac{1}{1 + \alpha} |0^{a+1}\rangle(A-I) |\varphi\rangle + |\Psi^{\perp}\rangle,
    \end{aligned}
    \end{equation}
    where $|\Psi^{\perp}\rangle$ satisfies $(|0^{a+1}\rangle \langle 0^{a+1}| \otimes I_N) |\Psi^{\perp}\rangle = 0$.
    Then we have
    \begin{equation} \label{proba}
    \begin{aligned}
    & \quad \ (I_M \otimes U \otimes I_2) |0^{a+1}\rangle |\psi_2\rangle \\
    & = (I_M \otimes U \otimes I_2) |0^{a+1}\rangle \left( |0^m\rangle \otimes \frac{C_M \|h_M(A)b \|}{\sqrt{M} \|\boldsymbol{x}\|} |h_M\rangle \otimes |0\rangle + |\Phi_2^\perp \rangle \right) \\
    & = |0^m\rangle \otimes U \left( |0^{a+1}\rangle \otimes \frac{C_M \|h_M(A)b \|}{\sqrt{M} \|\boldsymbol{x}\|} |h_M\rangle \right) \otimes |0\rangle  + (I_M \otimes U \otimes I_2) |0^{a+1}\rangle |\Phi_2^\perp \rangle \\
    & = |0^{m+a+1}\rangle \otimes \frac{C_M \|h_M(A)b \|}{(1 + \alpha) \sqrt{M} \|\boldsymbol{x}\|} (A-I) |h_M\rangle \otimes |0\rangle + |\Phi^{\perp}\rangle \\
    & = |0^{m+a+1}\rangle \otimes \frac{C_M \|h_M(A)b \|}{(1 + \alpha) \sqrt{M} \|\boldsymbol{x}\|} \|(A-I) |h_M\rangle\| \frac{(A-I) |h_M\rangle}{\|(A-I) |h_M\rangle\|} \otimes |0\rangle + |\Phi^{\perp}\rangle \\
    & = |0^{m+a+1}\rangle \otimes \frac{C_M \|(A-I) h_M(A)b \|}{(1 + \alpha) \sqrt{M} \|\boldsymbol{x}\|}  |f_M\rangle \otimes |0\rangle + |\Phi^{\perp}\rangle,
    \end{aligned}
    \end{equation}
    where $|\Phi^{\perp}\rangle$ satisfies $(|0^{m+a+1}\rangle \langle 0^{m+a+1}| \otimes I_N \otimes |0\rangle \langle 0|) |\Phi^{\perp}\rangle = 0$.
\end{enumerate}

\begin{enumerate}[\textbf{Step 5.}]
    \item
    Measure the first register and the ancilla qubit. Conditioned on seeing $00 \cdots 0$ on the first $(m+a+1)$ qubits and $0$ on the ancilla qubit, we have the desired state $|f_M\rangle = (A-I)|h_M\rangle / \| (A-I)|h_M\rangle \|$.
\end{enumerate}

\section{Error estimates}
In order to analyze the error, we define the vectors that describe state $|\tilde{\boldsymbol{x}}\rangle$, $|f\rangle$, $|f_M\rangle$ and $|\tilde{f}\rangle$ in the following.

\begin{definition} \label{def-x}
    Let $\epsilon' > 0$. For state $|\tilde{\boldsymbol{x}}\rangle$ such that $\| |\boldsymbol{x}\rangle - |\tilde{\boldsymbol{x}}\rangle \| \leq \epsilon'$, define $NM$-dimensional vectors $\tilde{\boldsymbol{x}}$ such that $|\tilde{\boldsymbol{x}}\rangle = \sum_i \tilde{\boldsymbol{x}}_i |i\rangle / \| \sum_i \tilde{\boldsymbol{x}}_i |i\rangle \|$, where $\tilde{\boldsymbol{x}}_i$ is the $i$-th element of $\tilde{\boldsymbol{x}}$, and define vectors $\tilde{x}^{(k)} \in \mathbb{C}^M (k=0,1,\cdots,M-1)$ such that $\tilde{\boldsymbol{x}} = (\tilde{x}^{(0)}, \tilde{x}^{(1)}, \cdots, \tilde{x}^{(M-1)})^T$.
\end{definition}

\begin{definition} \label{f-fm}
    Define
    \begin{equation}
    \begin{aligned}
    \boldsymbol{f} & :=\frac{1}{\| \boldsymbol{x} \|} \log(A) b, \\
    \boldsymbol{f}_M & := \frac{1}{\| \boldsymbol{x} \|} f_M(A) b = \frac{1}{\| \boldsymbol{x} \|} (A-I) \sum_{j=1}^{M} \omega_j x^{(j)},\\
    \boldsymbol{\tilde{f}} & := \frac{1}{\|\tilde{\boldsymbol{x}}\|} (A-I) \sum_{j=1}^{M} \omega_j \tilde{x}^{(j)}\\
    \end{aligned}
    \end{equation}
    such that $|f\rangle = \sum_i f_i |i\rangle / \| \sum_i f_i |i\rangle \|$, $|f_M\rangle = \sum_i f_{M_i} |i\rangle / \| \sum_i f_{M_i} |i\rangle \|$, and $|\tilde{f}\rangle = \sum_i \tilde{f}_i |i\rangle / \| \sum_i \tilde{f}_i |i\rangle \|$ respectively, where $ f_i$, $ f_{M_i}$ and $ \tilde{f}_i $ is the i-th element of $ \boldsymbol{f} $, $ \boldsymbol{f}_M $ and $ \boldsymbol{\tilde{f}} $ respectively.
\end{definition}

Then in order to get the bound of $ \| |f\rangle - |\tilde{f}\rangle \| $, we first compute the bounds of $ \| \boldsymbol{f} - \boldsymbol{f}_M \| $ and $ \| \boldsymbol{f}_M - \boldsymbol{\tilde{f}} \| $, which is shown in Lemma \ref{l-7} and Lemma \ref{l-8}.

\begin{lemma} \label{l-7}
    With the assumption that $\|A-I\| < 1$, for vectors $\boldsymbol{f}$ and $\boldsymbol{f}_M$, the following inequality holds.
    \begin{equation}
    \| \boldsymbol{f} - \boldsymbol{f}_M \| \leq \frac{1}{\|\boldsymbol{x}\|} \| f(A) - f_M(A) \| \|b\|.
    \end{equation}
\end{lemma}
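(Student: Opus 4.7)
The plan is to proceed directly from the definitions of $\boldsymbol{f}$ and $\boldsymbol{f}_M$ in Definition \ref{f-fm}, since both are explicit linear expressions in $b$ sharing the common normalization factor $1/\|\boldsymbol{x}\|$. The key observation is that the difference factors cleanly as
\[
\boldsymbol{f} - \boldsymbol{f}_M \;=\; \frac{1}{\|\boldsymbol{x}\|}\bigl(\log(A) - f_M(A)\bigr)\,b,
\]
so that all of the $M$-dependence is confined to a single matrix $\log(A)-f_M(A)$ acting on the fixed vector $b$.

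First I would substitute the two definitions and pull out the common scalar $1/\|\boldsymbol{x}\|$. Next I would take the $2$-norm of both sides and use the standard submultiplicative inequality $\|Mv\|\leq \|M\|\,\|v\|$ (where $\|\cdot\|$ is the operator $2$-norm on the matrix factor, consistent with the convention stated in the introduction) applied to $M = \log(A)-f_M(A)$ and $v = b$. This yields exactly
\[
\|\boldsymbol{f}-\boldsymbol{f}_M\| \;\leq\; \frac{1}{\|\boldsymbol{x}\|}\,\|\log(A)-f_M(A)\|\,\|b\|,
\]
which matches the claimed bound (with $f(A)$ understood as $\log(A)$, as is clear from Definition \ref{f-fm}).

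There is essentially no hard step: the argument is a one-line manipulation followed by submultiplicativity. The only thing worth flagging is the minor notational mismatch in the statement, where $f(A)$ is written in place of $\log(A)$; this should be read as $\log(A)$ according to the preceding definitions. The assumption $\|A-I\|<1$ plays no role in the proof of this lemma itself but is retained for consistency with the surrounding framework (it is what makes the quantitative bound on $\|\log(A)-f_M(A)\|$ from Theorem \ref{th-3} meaningful when this lemma is later combined with that theorem).
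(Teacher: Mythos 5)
Your proposal is correct and follows the same route as the paper, which simply invokes the definitions of $\boldsymbol{f}$ and $\boldsymbol{f}_M$; you just spell out the factoring of the common $1/\|\boldsymbol{x}\|$ and the submultiplicativity step $\|(\log(A)-f_M(A))b\|\leq\|\log(A)-f_M(A)\|\,\|b\|$ that the paper leaves implicit. Your side remarks (that $f(A)$ should be read as $\log(A)$ and that $\|A-I\|<1$ is not actually used here) are accurate.
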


\begin{proof}
    Using the definitions of $\boldsymbol{f}$ and $\boldsymbol{f}_M$, we have this result. 
\end{proof}

\begin{lemma} \label{l-8}
    With the assumption that $\|A-I\| < 1$, for vector $\boldsymbol{f}_M$ and $\boldsymbol{\tilde{f}}$, the following equation holds.
    \begin{equation}
    \| \boldsymbol{f}_M - \boldsymbol{\tilde{f}} \| \leq \delta_M \epsilon',
    \end{equation}
    where $\delta_M = \|A-I\| \left( \sum_{j=1}^{M}|\omega_j|^2 \right)^{\frac{1}{2}}$, and $\epsilon'$ is defined in Definition \ref{def-x}.
\end{lemma}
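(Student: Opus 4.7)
The plan is to reduce $\|\boldsymbol{f}_M - \boldsymbol{\tilde{f}}\|$ to the state-vector distance $\||\boldsymbol{x}\rangle - |\tilde{\boldsymbol{x}}\rangle\| \le \epsilon'$ by exploiting that $\boldsymbol{f}_M$ and $\boldsymbol{\tilde{f}}$ are built from the blocks $x^{(j)}$ and $\tilde{x}^{(j)}$ via the same linear operation: multiply by $\omega_j$, sum over $j$, apply $(A-I)$. First I would fix the normalization of $\tilde{\boldsymbol{x}}$ by choosing $\|\tilde{\boldsymbol{x}}\| = \|\boldsymbol{x}\|$; this is permissible because Definition \ref{def-x} only pins down $\tilde{\boldsymbol{x}}$ up to a positive scalar (it is recovered from $|\tilde{\boldsymbol{x}}\rangle$ by an arbitrary normalization). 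Under this convention, the factors $1/\|\boldsymbol{x}\|$ and $1/\|\tilde{\boldsymbol{x}}\|$ in front of $\boldsymbol{f}_M$ and $\boldsymbol{\tilde{f}}$ match, so
\begin{equation*}
\boldsymbol{f}_M - \boldsymbol{\tilde{f}} \;=\; \frac{1}{\|\boldsymbol{x}\|}\,(A-I)\sum_{j=1}^{M} \omega_j \bigl( x^{(j)} - \tilde{x}^{(j)} \bigr).
\end{equation*}

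Next I would take norms and apply sub-multiplicativity to pull out $\|A-I\|$, then use the Cauchy--Schwarz inequality on the sum:
\begin{equation*}
\left\| \sum_{j=1}^M \omega_j \bigl(x^{(j)} - \tilde{x}^{(j)}\bigr) \right\|
\;\le\; \left( \sum_{j=1}^M |\omega_j|^2 \right)^{1/2}
\left( \sum_{j=1}^M \| x^{(j)} - \tilde{x}^{(j)} \|^2 \right)^{1/2}.
\end{equation*}
The key structural identity, which follows directly from the block decomposition $\boldsymbol{x} = (x^{(1)},\dots,x^{(M)})^T$ and likewise for $\tilde{\boldsymbol{x}}$, is
\begin{equation*}
\sum_{j=1}^{M} \| x^{(j)} - \tilde{x}^{(j)} \|^2 \;=\; \| \boldsymbol{x} - \tilde{\boldsymbol{x}} \|^2.
\end{equation*}

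Finally I would convert $\|\boldsymbol{x} - \tilde{\boldsymbol{x}}\|$ into the state distance. Because $\|\boldsymbol{x}\| = \|\tilde{\boldsymbol{x}}\|$, we have $\boldsymbol{x} - \tilde{\boldsymbol{x}} = \|\boldsymbol{x}\|\bigl(|\boldsymbol{x}\rangle - |\tilde{\boldsymbol{x}}\rangle\bigr)$, hence $\|\boldsymbol{x} - \tilde{\boldsymbol{x}}\| \le \|\boldsymbol{x}\|\,\epsilon'$. Substituting into the chain of inequalities above, the $\|\boldsymbol{x}\|$ factors cancel and the bound $\delta_M \epsilon'$ drops out with $\delta_M = \|A-I\| \bigl(\sum_j |\omega_j|^2\bigr)^{1/2}$.

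The only subtle point is the normalization choice for $\tilde{\boldsymbol{x}}$: without fixing $\|\tilde{\boldsymbol{x}}\| = \|\boldsymbol{x}\|$, the prefactors would not match and one would instead encounter a term of the form $(1/\|\boldsymbol{x}\| - 1/\|\tilde{\boldsymbol{x}}\|)(A-I)\sum_j \omega_j \tilde{x}^{(j)}$, which complicates the estimate. Once that convention is adopted, no real obstacle remains and the proof is just the triangle inequality together with Cauchy--Schwarz.
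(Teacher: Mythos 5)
Your proof is correct and follows essentially the same route as the paper: pull out $\|A-I\|$, apply Cauchy--Schwarz to the weighted sum, use the block identity $\sum_j \|x^{(j)}-\tilde{x}^{(j)}\|^2 = \|\boldsymbol{x}-\tilde{\boldsymbol{x}}\|^2$, and land on $\||\boldsymbol{x}\rangle - |\tilde{\boldsymbol{x}}\rangle\| \leq \epsilon'$. The only difference is your normalization convention $\|\tilde{\boldsymbol{x}}\| = \|\boldsymbol{x}\|$, which is legitimate (Definition~\ref{def-x} fixes $\tilde{\boldsymbol{x}}$ only up to a positive scalar and $\boldsymbol{\tilde{f}}$ is scale-invariant) but also unnecessary: the paper simply keeps the normalized blocks $x^{(j)}/\|\boldsymbol{x}\| - \tilde{x}^{(j)}/\|\tilde{\boldsymbol{x}}\|$ together, and their squared norms sum directly to $\||\boldsymbol{x}\rangle - |\tilde{\boldsymbol{x}}\rangle\|^2$.
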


\begin{proof}
    Using the definitions, we have
    \begin{equation}
    \begin{aligned}
    \| \boldsymbol{f}_M - \boldsymbol{\tilde{f}} \| & = \left\| \frac{1}{\| \boldsymbol{x} \|} (A-I) \sum_{j=1}^{M} \omega_j x^{(j)} - \frac{1}{\|\tilde{\boldsymbol{x}}\|} (A-I) \sum_{j=1}^{M} \omega_j \tilde{x}^{(j)} \right\| \\
    & \leq \|A-I\| \left\| \sum_{j=1}^{M} \omega_j \left( \frac{x^{(j)}}{\| \boldsymbol{x} \|} - \frac{\tilde{x}^{(j)}}{\|\tilde{\boldsymbol{x}}\|}\right) \right\|. \\
    \end{aligned}
    \end{equation}
    Then using Cauchy-Schwarz inequality, we can obtain
    \begin{equation}
    \begin{aligned}
    \| \boldsymbol{f}_M - \boldsymbol{\tilde{f}} \| & \leq \|A-I\| \left( \sum_{j=1}^{M}|\omega_j|^2 \right)^{\frac{1}{2}} \left( \sum_{j=1}^{M} \left\| \frac{x^{(j)}}{\| \boldsymbol{x} \|} - \frac{\tilde{x}^{(j)}}{\|\tilde{\boldsymbol{x}}\|} \right\|^2 \right)^{\frac{1}{2}} \\
    & = \|A-I\| \left( \sum_{j=1}^{M}|\omega_j|^2 \right)^{\frac{1}{2}} \left\| \frac{\boldsymbol{x}}{\| \boldsymbol{x} \|} - \frac{\tilde{\boldsymbol{x}}}{\|\tilde{\boldsymbol{x}}\|} \right\| \\
    & = \|A-I\| \left( \sum_{j=1}^{M}|\omega_j|^2 \right)^{\frac{1}{2}} \| |\boldsymbol{x}\rangle - |\tilde{\boldsymbol{x}}\rangle \| \\
    & < \|A-I\| \left( \sum_{j=1}^{M}|\omega_j|^2 \right)^{\frac{1}{2}} \epsilon', \\
    \end{aligned}
    \end{equation}
    completing the proof.
\end{proof}

Before computing the error bound, we present the following lemma, which will be used in the proof of the error bound.
\begin{lemma} \label{l-9}
    (\cite{tosu20}, Lemma 14)
    For any vectors $v \in \mathbb{C}^N$ and $w \in \mathbb{C}^N$, the following inequality holds.
    \begin{equation}
    \left\| \frac{v}{\|v\|} - \frac{w}{\|w\|} \right\| \leq 2\frac{\|v-w\|}{\|v\|}.
    \end{equation}
\end{lemma}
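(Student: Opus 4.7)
The plan is to prove this by the standard add-and-subtract trick followed by two applications of the triangle inequality. This is a textbook inequality relating the error of two unit vectors to the error of the original vectors, and it can also be found in the cited reference \cite{tosu20}, so I mainly need to reproduce it cleanly.

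First I would insert the intermediate vector $w/\|v\|$ and decompose
\[
\frac{v}{\|v\|} - \frac{w}{\|w\|} = \frac{v-w}{\|v\|} + \frac{w(\|w\|-\|v\|)}{\|v\|\,\|w\|}.
\]
Applying the ordinary triangle inequality to this sum gives
\[
\left\| \frac{v}{\|v\|} - \frac{w}{\|w\|} \right\| \leq \frac{\|v-w\|}{\|v\|} + \frac{|\|w\|-\|v\||}{\|v\|},
\]
since the factor $\|w\|$ in the numerator cancels with the $\|w\|$ in the denominator.

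Second, I would invoke the reverse triangle inequality $|\|v\| - \|w\|| \leq \|v-w\|$ to bound the second term by $\|v-w\|/\|v\|$, yielding the desired factor of $2$. There is no real obstacle here; the only thing to be a little careful about is that the statement implicitly assumes $v \neq 0$ and $w \neq 0$ so that the normalized vectors are defined, which is fine in the intended application where the denominators come from nonzero solution vectors. I would close by noting that the bound is symmetric in role up to the choice of denominator, but we keep $\|v\|$ in the denominator because that is the form used in the subsequent error estimate.
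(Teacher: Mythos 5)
Your proposal is correct and follows essentially the same route as the paper's proof: insert the intermediate vector $w/\|v\|$, apply the triangle inequality to get $\|v-w\|/\|v\| + |\|w\|-\|v\||/\|v\|$, and then bound the second term via the reverse triangle inequality. Your added remark about the implicit nondegeneracy assumption $v,w \neq 0$ is a reasonable clarification but does not change the argument.
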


\begin{proof}
    Using the triangle inequality, we get
    \begin{equation}
    \begin{aligned}
    \left\| \frac{v}{\|v\|} - \frac{w}{\|w\|} \right\| & = \left\| \frac{v}{\|v\|} - \frac{w}{\|v\|} + \frac{w}{\|v\|} - \frac{w}{\|w\|} \right\| \\
    & \leq \left\| \frac{v}{\|v\|} - \frac{w}{\|v\|} \right\| + \left\| \frac{w}{\|v\|} - \frac{w}{\|w\|} \right\| \\
    & \leq \frac{\|v-w\|}{\|v\|} + \left| \frac{1}{\|v\|} - \frac{1}{\|w\|}\right| \|w\| \\
    & = \frac{\|v-w\|}{\|v\|} + \frac{| \|w\| - \|v\| |}{\|v\|}.
    \end{aligned}
    \end{equation}
    Again using the triangle inequality, we have $\|v\| = \|v-w+w\| \leq \|v-w\| + \|w\|$. Thus, $\left| \|v\|-\|w\| \right| \leq \|v-w\|$ and then $$ \left\| \frac{v}{\|v\|} - \frac{w}{\|w\|} \right\| \leq 2\frac{\|v-w\|}{\|v\|}. $$ 
\end{proof}

\begin{theorem} \label{final-error}
    Let $\|A-I\|<1$. We assume that the improved version of the HHL algorithm (i.e., LCU method) in Step 1 outputs state $|\tilde{\boldsymbol{x}}\rangle$ such that $\| |\boldsymbol{x}\rangle - |\tilde{\boldsymbol{x}}\rangle \| \leq \epsilon'$, where $0 < \epsilon' < \frac{1}{2}$. Then for the error of state $|\tilde{f}\rangle$, which is the actual output state of the algorithm, we have the bound
    \begin{equation} \label{def-epsilon}
    \left\| |f\rangle - |\tilde{f}\rangle \right\| \leq \frac{2}{K} \left(\epsilon_{M,A} + \sqrt{M} \delta_M  \epsilon' \right) \equiv \epsilon,
    \end{equation}
    where $K=\|f(A)|b\rangle\|(1-\|A-I\|)$.
\end{theorem}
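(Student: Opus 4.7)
The plan is to apply Lemma~\ref{l-9} in one shot to the pair $(\boldsymbol{f},\boldsymbol{\tilde f})$ with $v=\boldsymbol{f}$, giving the master inequality $\||f\rangle - |\tilde f\rangle\| \leq 2\|\boldsymbol{f}-\boldsymbol{\tilde f}\|/\|\boldsymbol{f}\|$, and then to split the numerator through the intermediate vector $\boldsymbol{f}_M$ by the triangle inequality. Lemma~\ref{l-7} combined with Theorem~\ref{th-3} bounds $\|\boldsymbol{f}-\boldsymbol{f}_M\| \leq \epsilon_{M,A}\|b\|/\|\boldsymbol{x}\|$, while Lemma~\ref{l-8} gives $\|\boldsymbol{f}_M-\boldsymbol{\tilde f}\| \leq \delta_M\epsilon'$. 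For the denominator, the definition of $\boldsymbol{f}$ yields $\|\boldsymbol{f}\| = \|\log(A)b\|/\|\boldsymbol{x}\| = \|\log(A)|b\rangle\|\,\|b\|/\|\boldsymbol{x}\|$, so the whole task reduces to controlling $\|\boldsymbol{x}\|$.

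The one nontrivial ingredient, and hence the main obstacle, is this upper bound on $\|\boldsymbol{x}\|$. Since $\boldsymbol{A}$ is block diagonal, $\|\boldsymbol{x}\|^2 = \sum_{j=1}^M \|x^{(j)}\|^2$ with $x^{(j)}=[\tau_j(A-I)+I]^{-1}b$. Because $\tau_j\in(0,1)$ and $\|A-I\|<1$, the Neumann series for $[\tau_j(A-I)+I]^{-1}$ converges and gives $\|[\tau_j(A-I)+I]^{-1}\|\leq 1/(1-\tau_j\|A-I\|)\leq 1/(1-\|A-I\|)$. Summing over $j$ yields $\|\boldsymbol{x}\|\leq\sqrt{M}\,\|b\|/(1-\|A-I\|)$; this is the estimate that manufactures both the $\sqrt{M}$ factor and the $(1-\|A-I\|)$ factor appearing in $K$.

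Putting these together, the first piece contributes $\epsilon_{M,A}\|b\|/(\|\boldsymbol{x}\|\,\|\boldsymbol{f}\|) = \epsilon_{M,A}/\|\log(A)|b\rangle\|\leq \epsilon_{M,A}/K$ (the last inequality using $1-\|A-I\|<1$), while the second piece contributes $\delta_M\epsilon'\,\|\boldsymbol{x}\|/(\|\log(A)|b\rangle\|\,\|b\|)\leq \sqrt{M}\,\delta_M\epsilon'/K$ after inserting the bound on $\|\boldsymbol{x}\|$. Multiplying by the factor $2$ from Lemma~\ref{l-9} and summing gives exactly the claimed bound $(2/K)(\epsilon_{M,A}+\sqrt{M}\delta_M\epsilon')=\epsilon$. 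Everything else is routine; the Neumann-series bound on $\|\boldsymbol{x}\|$ is the step that really does the work.
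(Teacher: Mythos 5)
Your proposal is correct and follows essentially the same route as the paper: Lemma~\ref{l-9} applied to $(\boldsymbol{f},\boldsymbol{\tilde f})$, the triangle-inequality split through $\boldsymbol{f}_M$ with Lemmas~\ref{l-7} and~\ref{l-8} plus Theorem~\ref{th-3}, and the bound $\|\boldsymbol{x}\| \leq \sqrt{M}\,\|b\|/(1-\|A-I\|)$. The only difference is cosmetic: where the paper writes $\|\boldsymbol{x}\| \leq \|\boldsymbol{A}^{-1}\|\,\|\boldsymbol{b}\|$ and cites Lemma~\ref{norm-A-1} together with $\|\boldsymbol{b}\|=\sqrt{M}\|b\|$, you inline exactly that estimate via the blockwise Neumann-series bound $\|[\tau_j(A-I)+I]^{-1}\|\leq 1/(1-\|A-I\|)$.
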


\begin{proof} \label{output-error}
    Making use of Definition \ref{f-fm} and Lemma \ref{l-9}, we have
    $$ \left\| |f\rangle - |\tilde{f}\rangle \right\| = \left\| \frac{\boldsymbol{f}}{\|\boldsymbol{f}\|} - \frac{\boldsymbol{\tilde{f}}}{\| \boldsymbol{\tilde{f}} \|} \right\| \leq 2 \frac{\| \boldsymbol{f} - \boldsymbol{\tilde{f}} \|}{\| \boldsymbol{f} \|} = 2 \frac{\| \boldsymbol{x} \|}{\|f(A)b\|} \cdot \| \boldsymbol{f} - \boldsymbol{\tilde{f}} \|. $$
    Then, using the triangle inequality, we have
    \begin{equation}
    \begin{aligned}
    \left\| |f\rangle - |\tilde{f}\rangle \right\|
    & \leq 2 \frac{\| \boldsymbol{x} \|}{\|f(A)b\|} \left( \Big\| \boldsymbol{f} - \boldsymbol{f}_M \Big\| + \left\| \boldsymbol{f}_M - \boldsymbol{\tilde{f}} \right\| \right) \\
    & \leq 2 \frac{\| \boldsymbol{x} \|}{\|f(A)b\|} \left( \frac{1}{\|\boldsymbol{x}\|} \| f(A) - f_M(A) \| \|b\| + \delta_M \epsilon' \right) \\
    & = 2 \frac{1}{\|f(A)b\|} \bigg( \| f(A) - f_M(A) \| \|b\| + \| \boldsymbol{x} \| \delta_M \epsilon' \bigg) \\
    & = 2 \frac{1}{\|f(A)|b\rangle\|} \left( \| f(A) - f_M(A) \| + \frac{\| \boldsymbol{x} \|}{\| b \|} \delta_M \epsilon' \right) \\
    & \leq 2 \frac{1}{\|f(A)|b\rangle\|} \left( \epsilon_{M,A} + \frac{\| \boldsymbol{x} \|}{\| b \|} \delta_M \epsilon' \right), \\
    \end{aligned}
    \end{equation}
    where the last inequality uses Theorem \ref{th-3}.
    From $ \boldsymbol{A}\boldsymbol{x} = \boldsymbol{b} $, we can get that $ \| \boldsymbol{x} \| \leq \| \boldsymbol{A}^{-1} \| \| \boldsymbol{b} \| $, so that $$ \left\| |f\rangle - |\tilde{f}\rangle \right\| \leq 2 \frac{1}{\|f(A)|b\rangle\|} \left( \epsilon_{M,A} + \frac{\|\boldsymbol{A}^{-1}\| \|\boldsymbol{b}\|}{\| b \|} \delta_M \epsilon' \right). $$
    Since $ \boldsymbol{b} = (b, b, \cdots, b)^T$, then
    \begin{equation} \label{b-b'}
    \|\boldsymbol{b}\| = \sqrt{M} \|b\|.
    \end{equation}
    Using this equation (\ref{b-b'}) and the upper bound of $ \|\boldsymbol{A}^{-1}\| $ that will be shown in Lemma \ref{norm-A-1}, we obtain
    \begin{equation}
    \begin{aligned}
    \left\| |f\rangle - |\tilde{f}\rangle \right\|
    & \leq 2 \frac{1}{\|f(A)|b\rangle\|} \left( \epsilon_{M,A} + \frac{ \sqrt{M} \delta_M \epsilon'}{1-\|A-I\|} \right) \\
    & = \frac{2}{\|f(A)|b\rangle\|(1-\|A-I\|)} \left(\epsilon_{M,A}(1-\|A-I\|) + \sqrt{M} \delta_M \epsilon' \right) \\
    & \leq \frac{2}{\|f(A)|b\rangle\|(1-\|A-I\|)} \left(\epsilon_{M,A} + \sqrt{M} \delta_M \epsilon' \right) \\
    & = \frac{2}{K} \left(\epsilon_{M,A} + \sqrt{M} \delta_M \epsilon' \right). \\
    \end{aligned}
    \end{equation}
\end{proof}

\section{Success probability}
In this section, we show the lower bound of the success probability of the algorithm. According to (\ref{proba}), the ideal success probability is
\begin{equation}
p = \frac{C_M^2}{(1 + \alpha)^2 M \|\boldsymbol{x}\|^2}
\|(A-I)h_M(A)b\|^2 .
\end{equation}
It can be simplified as following,
\begin{equation} \label{ideal-p}
\begin{aligned}
p   = \frac{C_M^2}{(1 + \alpha)^2 M \|\boldsymbol{x}\|^2} \|f_M(A)b\|^2 = \frac{C_M^2}{(1 + \alpha)^2 M} \|\boldsymbol{f}_M\|^2 . \\
\end{aligned}
\end{equation}
According to Lemma (\ref{l-8}), $\|\boldsymbol{f}_M\| \leq  \|\boldsymbol{\tilde{f}}\| + \delta_M \epsilon'$, here $0 < \epsilon' < 1/2$. Usually, $\epsilon' \ll 1$. Therefore, the actual success probability can be described as
\begin{equation}
\tilde{p} = \frac{C_M^2}{(1 + \alpha)^2 M} \|\boldsymbol{\tilde{f}}\|^2.
\end{equation}

\begin{remark}
    A more accurate upper bound of (\ref{ideal-p}) reads
    \begin{equation*}
    \begin{aligned}
    p & \leq \frac{C_M^2}{(1 + \alpha)^2 M} \left( \|\boldsymbol{\tilde{f}}\| + \delta_M \epsilon' \right)^2 \leq \frac{C_M^2}{(1 + \alpha)^2 M} \left( \|\boldsymbol{\tilde{f}}\| + \frac{\sqrt{M}}{C_M} \epsilon' \right)^2 \\
    & \approx \frac{C_M^2}{(1 + \alpha)^2 M} \|\boldsymbol{\tilde{f}}\|^2 + \frac{2 C_M}{(1 + \alpha)^2 \sqrt{M}} \|\boldsymbol{\tilde{f}}\| \epsilon',
    \end{aligned}
    \end{equation*}
    where in the second inequality we use the bound $$ \delta_M \leq (M \max_j \omega_j^2)^\frac{1}{2} = \frac{\sqrt{M}}{C_M}. $$
    When $\frac{C_M}{2\sqrt{M}} \|\boldsymbol{\tilde{f}}\| \gg \epsilon'$, we can neglect the second term.
\end{remark}

\begin{theorem}
    Assuming that the LCU method in Step 1 outputs state $|\tilde{\boldsymbol{x}}\rangle$ such that $\| |\boldsymbol{x}\rangle - |\tilde{\boldsymbol{x}}\rangle \| \leq \epsilon'$, where $0 \leq \epsilon' \leq 1/2$. And by an appropriate setting of parameters $\epsilon'$ and $M$, we assume that $\| |f\rangle - |\tilde{f}\rangle \| \leq \epsilon$, where $\epsilon$ is defined in Eq.(\ref{def-epsilon}). Then
    \begin{equation}
    \tilde{p} \geq \left(1 - \frac{\epsilon}{2}\right)^2 \left( \frac{C_M K}{(1 + \alpha) M} \right)^2,
    \end{equation}
    where $K=\|f(A)|b\rangle\|(1-\|A-I\|)$.
\end{theorem}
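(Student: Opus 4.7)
The plan is to reduce everything to a lower bound on $\|\boldsymbol{\tilde{f}}\|$: since $\tilde{p} = C_M^2\|\boldsymbol{\tilde{f}}\|^2/((1+\alpha)^2 M)$ by (the $\tilde{p}$-analogue of) Eq.~(\ref{ideal-p}), the claimed bound is equivalent to $\|\boldsymbol{\tilde{f}}\| \geq (1-\epsilon/2)\,K/\sqrt{M}$. I would obtain this as the product of two inequalities, $\|\boldsymbol{\tilde{f}}\| \geq (1-\epsilon/2)\|\boldsymbol{f}\|$ and $\|\boldsymbol{f}\| \geq K/\sqrt{M}$.

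The second of these is direct. From Definition \ref{f-fm} one reads off $\|\boldsymbol{f}\| = \|f(A)|b\rangle\|\,\|b\|/\|\boldsymbol{x}\|$, and combining $\|\boldsymbol{x}\| \leq \|\boldsymbol{A}^{-1}\|\,\|\boldsymbol{b}\|$ with Eq.~(\ref{b-b'}) and the forthcoming bound $\|\boldsymbol{A}^{-1}\| \leq 1/(1-\|A-I\|)$ from Lemma \ref{norm-A-1} gives $\|\boldsymbol{x}\| \leq \sqrt{M}\|b\|/(1-\|A-I\|)$. Substituting and using $K = \|f(A)|b\rangle\|(1-\|A-I\|)$ yields $\|\boldsymbol{f}\| \geq K/\sqrt{M}$.

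For the first inequality, the reverse triangle inequality $\|\boldsymbol{\tilde{f}}\| \geq \|\boldsymbol{f}\| - \|\boldsymbol{f}-\boldsymbol{\tilde{f}}\|$ reduces the task to showing $\|\boldsymbol{f}-\boldsymbol{\tilde{f}}\| \leq (\epsilon/2)\|\boldsymbol{f}\|$. Splitting $\boldsymbol{f}-\boldsymbol{\tilde{f}} = (\boldsymbol{f}-\boldsymbol{f}_M) + (\boldsymbol{f}_M - \boldsymbol{\tilde{f}})$ and invoking Lemmas \ref{l-7}, \ref{l-8} together with Theorem \ref{th-3}, one obtains $\|\boldsymbol{f}-\boldsymbol{\tilde{f}}\| \leq \|b\|\,\epsilon_{M,A}/\|\boldsymbol{x}\| + \delta_M\epsilon'$. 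Using the identity $\epsilon K/2 = \epsilon_{M,A} + \sqrt{M}\,\delta_M\epsilon'$ from Eq.~(\ref{def-epsilon}) and the formulas for $\|\boldsymbol{f}\|$ and $K$ above, a term-by-term comparison shows this bound is at most $(\epsilon/2)\|\boldsymbol{f}\|$: the $\epsilon_{M,A}$ term is absorbed via $1-\|A-I\|<1$, and the $\delta_M\epsilon'$ term is absorbed via the bound $\|\boldsymbol{x}\| \leq \sqrt{M}\|b\|/(1-\|A-I\|)$. Combining the two inequalities and plugging into the formula for $\tilde{p}$ gives the claim.

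The main obstacle is essentially book-keeping: verifying that the chain of inequalities that defined $\epsilon$ in Theorem \ref{final-error} can be reassembled in reverse so that the $\epsilon/2$ prefactor comes out exactly, with no spurious constants. This also forces the present theorem to be stated (and proved) so that the Lemma \ref{norm-A-1} bound on $\|\boldsymbol{A}^{-1}\|$ can be invoked, as both the lower bound on $\|\boldsymbol{f}\|$ and the absorption of the $\delta_M\epsilon'$ term depend on it.
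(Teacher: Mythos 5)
Your proposal is correct and follows essentially the same route as the paper: reduce the claim to $\|\boldsymbol{\tilde{f}}\| \geq (1-\epsilon/2)\|\boldsymbol{f}\|$ via the triangle inequality, and then lower-bound $\|\boldsymbol{f}\| \geq K/\sqrt{M}$ using $\|\boldsymbol{x}\| \leq \|\boldsymbol{A}^{-1}\|\,\|\boldsymbol{b}\|$, Eq.~(\ref{b-b'}) and Lemma \ref{norm-A-1}. The only difference is that the paper gets $\|\boldsymbol{f}-\boldsymbol{\tilde{f}}\| \leq (\epsilon/2)\|\boldsymbol{f}\|$ by invoking the intermediate inequality $2\|\boldsymbol{f}-\boldsymbol{\tilde{f}}\|/\|\boldsymbol{f}\| \leq \epsilon$ from the proof of Theorem \ref{final-error}, whereas you re-derive it term by term from Lemmas \ref{l-7}, \ref{l-8}, Theorem \ref{th-3} and Lemma \ref{norm-A-1} --- a valid (and slightly more explicit) unpacking of the same argument.
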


\begin{proof}
    By Theorem \ref{final-error}, we can derive that $2\|\boldsymbol{f} - \boldsymbol{\tilde{f}}\| / \|\boldsymbol{f}\| \leq \epsilon$. Thus, using the triangle inequality, we have
    \begin{equation}
    \|\boldsymbol{f}\| \leq \|\boldsymbol{f}-\boldsymbol{\tilde{f}}\| + \|\boldsymbol{\tilde{f}}\| \leq \frac{\epsilon}{2}\|\boldsymbol{f}\| + \|\boldsymbol{\tilde{f}}\|.
    \end{equation}
    Therefore,
    \begin{equation}
    \begin{aligned}
    \|\boldsymbol{\tilde{f}}\| & \geq \left(1 - \frac{\epsilon}{2}\right) \|\boldsymbol{f}\| = \left(1 - \frac{\epsilon}{2}\right) \frac{\|f(A)b\|}{\|\boldsymbol{x}\|} \\
    & \geq \left(1 - \frac{\epsilon}{2}\right) \frac{\|f(A)|b\rangle\|}{\|\boldsymbol{A}^{-1}\| \sqrt{M}} \geq \left(1 - \frac{\epsilon}{2}\right) \frac{\|f(A)|b\rangle\|(1-\|A-I\|)}{\sqrt{M}}.
    \end{aligned}
    \end{equation}
    Thus, the lower bound of $\tilde{p}$ reads
    \begin{equation}
    \begin{aligned}
        \tilde{p} & = \frac{C_M^2}{(1 + \alpha)^2 M} \|\boldsymbol{\tilde{f}}\|^2 \\
        & \geq \frac{C_M^2}{(1 + \alpha)^2 M} \left( \left(1 - \frac{\epsilon}{2}\right) \frac{\|f(A)|b\rangle\|(1-\|A-I\|)}{\sqrt{M}} \right)^2 \\
        & = \left(1 - \frac{\epsilon}{2}\right)^2 \left( \frac{C_M K}{(1 + \alpha) M} \right)^2.
    \end{aligned}
    \end{equation}
\end{proof}

\begin{remark}
    It seems that the lower bound of $\tilde{p}$ may be very small when $M$ is large, but actually the parameter $C_M = \frac{1}{\max_j \omega_j} (1 \leq j \leq M)$ increases with the increase of $M$. Since $$ \frac{1}{M} = \frac{1}{M} \sum_{j=1}^{M} \omega_j \leq \max_j \omega_j \leq 1, $$
    then $ 1 \leq C_M \leq M$. Hence, $$\tilde{p} \geq \left(1 - \frac{\epsilon}{2}\right)^2  \frac{K^2}{(1 + \alpha)^2 M^2}. $$
\end{remark}

Furthermore, using the technique of amplitude amplification \cite{amp}, we can repeat Step 1 to Step 4 of the algorithm $ O(1/\sqrt{\tilde{p}}) $ times to obtained a constant success probability.

\section{Linear solver for $\boldsymbol{A}\boldsymbol{x} = \boldsymbol{b}$}
In this section, we discuss the detailed implementation of the Step 1 of the algorithm, and derive complexity estimate in Theorem \ref{th-complexity}.

\subsection{The condition number of $\boldsymbol{A}$}
In this subsection, we consider the condition number of matrix $\boldsymbol{A}$ in Eq.(\ref{new-A}), which will be used in the Theorem \ref{th-complexity}. We first present the upper bound of $\|\boldsymbol{A}\|$ and $\|\boldsymbol{A}^{-1}\|$.

\begin{lemma} \label{norm-A}
    Let $\|A-I\| < 1$, then for the matrix $\boldsymbol{A}$ in Eq.(\ref{new-A}), $\|\boldsymbol{A}\| < 2$ holds.
\end{lemma}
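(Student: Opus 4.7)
The plan is to exploit the block-diagonal structure of $\boldsymbol{A}$ together with the triangle inequality and the fact that the Gauss--Legendre quadrature nodes on $[0,1]$ lie in the interior of the unit interval.

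First I would recall that for any block-diagonal operator $\boldsymbol{A} = \mathrm{diag}(B_1,\ldots,B_M)$ acting on the orthogonal direct sum of the corresponding subspaces, the spectral norm factorises as $\|\boldsymbol{A}\| = \max_{1\leq j\leq M}\|B_j\|$. Applying this to Eq.(\ref{new-A}) with $B_j = \tau_j(A-I) + I$ immediately reduces the lemma to bounding $\|\tau_j(A-I)+I\|$ for each $j$.

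Next I would handle a single block by the triangle inequality,
\begin{equation*}
\|\tau_j(A-I)+I\| \;\leq\; \tau_j \|A-I\| + 1,
\end{equation*}
and then invoke the two structural facts we have: the hypothesis $\|A-I\|<1$, and the location of the shifted Gauss--Legendre nodes $\tau_j = \tfrac{1}{2}t_j + \tfrac{1}{2}$. Since the classical Gauss--Legendre nodes satisfy $t_j \in (-1,1)$, we have $\tau_j \in (0,1)$, and in particular $\tau_j \leq 1$. Combining these gives $\tau_j\|A-I\| < 1$ and therefore $\|\tau_j(A-I)+I\| < 2$ for every $j$.

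Taking the maximum over $j$ then yields $\|\boldsymbol{A}\| < 2$, which is the claim. There is no real obstacle here; the only point worth stating carefully in the write-up is that the norm of a block-diagonal matrix equals the maximum block norm, and that the quadrature nodes lie in $(0,1)$ after the affine shift from $[-1,1]$ to $[0,1]$. Everything else is a one-line triangle-inequality estimate.
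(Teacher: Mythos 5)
Your proof is correct and follows essentially the same route as the paper: a triangle-inequality bound on each block combined with $\tau_j \in (0,1)$ and $\|A-I\|<1$. The only cosmetic difference is that you use the max-over-blocks characterization $\|\boldsymbol{A}\| = \max_j\|\tau_j(A-I)+I\|$, whereas the paper writes $\boldsymbol{A} = \mathrm{diag}(\tau_1,\dots,\tau_M)\otimes(A-I) + I_M\otimes I_N$ and bounds the whole sum at once via $\|X\otimes Y\|=\|X\|\,\|Y\|$; both yield the identical estimate $\|A-I\|+1<2$.
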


\begin{proof}
    $\boldsymbol{A}$ can be rewritten as $\boldsymbol{A} = \text{diag}(\tau_1,\tau_2,\cdots,\tau_M) \otimes (A-I) + I_M \otimes I_N$. For square matrices $X$ and $Y$, $\|X \otimes Y\| = \|X\| \|Y\|$, and note that $\tau_j \in (0,1)$ for $j \in \{1,2,\cdots,M\}$. Thus, we have
    \begin{equation}
    \begin{aligned}
    \|\boldsymbol{A}\| & \leq \| \text{diag}(\tau_1,\tau_2,\cdots,\tau_M) \otimes (A-I) \| + \| I_M \otimes I_N \| \\
    & = \| \text{diag}(\tau_1,\tau_2,\cdots,\tau_M) \| \cdot \| A-I \| + 1 \\
    & \leq \|A-I\| + 1 < 2 .\\
    \end{aligned}
    \end{equation} 
\end{proof}

\begin{lemma} \label{norm-A-1}
    Let $\|A-I\| < 1$, then for the matrix $\boldsymbol{A}$ in Eq.(\ref{new-A}), $\|\boldsymbol{A}^{-1}\| < \frac{1}{1-\|A-I\|}$ holds.
\end{lemma}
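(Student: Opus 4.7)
The plan is to exploit the block-diagonal structure of $\boldsymbol{A}$ to reduce the estimate on $\|\boldsymbol{A}^{-1}\|$ to an estimate on the norm of each individual block inverse $[\tau_j(A-I)+I]^{-1}$, and then control each block via a Neumann series argument using the hypothesis $\|A-I\|<1$ together with $\tau_j\in(0,1)$.

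First, I would note that because $\boldsymbol{A}$ is block diagonal, so is $\boldsymbol{A}^{-1}$, with diagonal blocks $[\tau_j(A-I)+I]^{-1}$ for $j=1,\dots,M$. The operator 2-norm of a block-diagonal matrix equals the maximum of the 2-norms of its blocks, hence
\begin{equation*}
\|\boldsymbol{A}^{-1}\| \;=\; \max_{1\le j\le M}\bigl\|[\tau_j(A-I)+I]^{-1}\bigr\|.
\end{equation*}

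Next, for each fixed $j$, I would apply the standard Neumann/Banach lemma: since $\|\tau_j(A-I)\|=\tau_j\|A-I\|<1$ (using $\tau_j\in(0,1)$ and $\|A-I\|<1$), the matrix $\tau_j(A-I)+I$ is invertible and
\begin{equation*}
\bigl\|[\tau_j(A-I)+I]^{-1}\bigr\| \;\le\; \frac{1}{1-\tau_j\|A-I\|}.
\end{equation*}
Because $\tau_j<1$, the denominator satisfies $1-\tau_j\|A-I\|>1-\|A-I\|>0$, so each block-inverse norm is strictly bounded by $1/(1-\|A-I\|)$.

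Taking the maximum over $j$ then gives $\|\boldsymbol{A}^{-1}\|<1/(1-\|A-I\|)$, as desired. There is no real obstacle here; the only small point to be careful about is invoking the nodes $\tau_j=\tfrac12 t_j+\tfrac12$ of the Gauss–Legendre rule on $[0,1]$, which indeed lie strictly inside $(0,1)$, so that the strict inequality $\tau_j\|A-I\|<\|A-I\|$ (and consequently the strict inequality in the conclusion) is preserved.
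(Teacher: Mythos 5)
Your proof is correct and follows essentially the same route as the paper: reduce to the maximum of the block norms via the block-diagonal structure, bound each block inverse by the Neumann/Banach lemma as $1/(1-\tau_j\|A-I\|)$, and use $\tau_j\in(0,1)$ to conclude. Your remark about the Gauss--Legendre nodes lying strictly inside $(0,1)$ is a nice bit of extra care that the paper leaves implicit.
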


\begin{proof}
    For $j \in \{1,2,\cdots,M\}$,
    $$\|(\tau_j (A-I)+I)^{-1}\| \leq \frac{1}{1-\tau_j \|A-I\|} < \frac{1}{1-\|A-I\|}.$$
    It is obvious that $\boldsymbol{A}^{-1}$ is a block diagonal matrix with diagonal blocks $(\tau_j(A-I)+I)^{-1}$ $(j \in \{1,2,\cdots,M\})$. Thus,
    \begin{equation}
    \|\boldsymbol{A}^{-1}\| = \max_j \{ \|(\tau_j (A-I)+I)^{-1}\| \} < \frac{1}{1-\|A-I\|}.
    \end{equation}
\end{proof}

With Lemma \ref{norm-A} and Lemma \ref{norm-A-1}, we can easily get the upper bound of condition number of $\boldsymbol{A}$ in the following corollary.

\begin{corollary} \label{condition-num}
Let $\|A-I\| < 1$. Then the condition number $\kappa_{\boldsymbol{A}}:=\|\boldsymbol{A}\| \|\boldsymbol{A}^{-1}\|$ of $\boldsymbol{A}$ in Eq.(\ref{new-A}) is bounded as
\begin{equation}
\kappa_{\boldsymbol{A}} < \frac{2}{1-\|A-I\|}.
\end{equation}
\end{corollary}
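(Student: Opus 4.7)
The statement is a direct corollary of the two immediately preceding lemmas, so my plan is essentially to combine them multiplicatively. The definition of the condition number gives $\kappa_{\boldsymbol{A}} = \|\boldsymbol{A}\|\,\|\boldsymbol{A}^{-1}\|$, and under the hypothesis $\|A-I\|<1$ Lemma \ref{norm-A} already supplies $\|\boldsymbol{A}\| < 2$ while Lemma \ref{norm-A-1} supplies $\|\boldsymbol{A}^{-1}\| < \frac{1}{1-\|A-I\|}$.

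Concretely, I would write a single-line proof: invoke Lemma \ref{norm-A} and Lemma \ref{norm-A-1}, multiply the two bounds, and conclude $\kappa_{\boldsymbol{A}} < 2 \cdot \frac{1}{1-\|A-I\|} = \frac{2}{1-\|A-I\|}$. No further estimates, induction, or case analysis are needed, since both factors are positive and the inequalities are strict.

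There is no real obstacle here; the only minor care-point is verifying that the strict inequality is preserved under the product, which follows because both upper bounds are strict and both quantities being bounded are nonnegative (and in fact positive, since $\boldsymbol{A}$ is invertible by construction — each diagonal block $\tau_j(A-I)+I$ is nonsingular under $\|A-I\|<1$). The hypothesis $\|A-I\|<1$ is also what makes the denominator $1-\|A-I\|$ positive, so the right-hand side is well defined and finite. Hence the corollary holds.
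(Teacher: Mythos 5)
Your proposal is correct and matches the paper's argument exactly: the paper also obtains the corollary by simply multiplying the bounds $\|\boldsymbol{A}\| < 2$ from Lemma \ref{norm-A} and $\|\boldsymbol{A}^{-1}\| < \frac{1}{1-\|A-I\|}$ from Lemma \ref{norm-A-1}. Your added remark about positivity preserving the strict inequality is a fine (if unstated in the paper) detail, and nothing further is needed.
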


\subsection{The Oracles $\mathcal{P}_{\boldsymbol{A}}$ and $\mathcal{P}_{\boldsymbol{b}}$}
Now, with the assumption that we can use the oracle $\mathcal{P}_A$ in (\ref{O-A}), we consider how to construct the oracle $\mathcal{P}_{\boldsymbol{A}}$, which consists of the oracle $\mathcal{O}_{\boldsymbol{A}}$ returning the entry of $\boldsymbol{A}$ and the oracle $\mathcal{O}_{\boldsymbol{\nu}}$ returning the positions of the nonzero entries of $\boldsymbol{A}$. For simplicity, we assume that all diagonal entries of $A$ are nonzero.

We follow the analysis of \cite{tosu20}. We first consider the construction of oracle $\mathcal{O}_{\boldsymbol{\nu}}$ using oracle $\mathcal{O}_{\nu}$ in (\ref{O-v}) and CNOT gates. Note that the row index of $l$-th nonzero element of $(kN+j)$-th column of $\boldsymbol{A}$ is $kN + v(j,l)$ for $k \in \{0,1,\cdots,M-1\} $ and $j \in \{1,2,\cdots,N\}$. Thus, we have
\begin{equation}
|k,j\rangle |0^m,l\rangle \
\xmapsto{\mathcal{O}_\nu}
|k,j\rangle |0^m,v(j,l)\rangle
\xmapsto{\text{CNOT}}
|k,j\rangle |k,v(j,l)\rangle
\equiv \mathcal{O}_{\boldsymbol{\nu}}|k,j\rangle |0^m,l\rangle.
\end{equation}
From it we can easily know that the query complexity and gate complexity of $\mathcal{O}_{\boldsymbol{\nu}}$ is $O(1)$ and $O(m) = O(\log M)$ respectively.

Next, we consider how to construct oracle $\mathcal{O}_{\boldsymbol{A}}$ using oracle $O_A$. Noting that the non-diagonal blocks of $\boldsymbol{A}$ are zero, so we can consider oracle $\mathcal{O}_{\boldsymbol{A}}$ that satisfies
\begin{equation}
\mathcal{O}_{\boldsymbol{A}}|k,i\rangle |k',j\rangle |0\rangle =
\begin{cases}
|k,i\rangle |k',j\rangle |0\rangle, \qquad \qquad \qquad \qquad \  \text{when} \ k \neq k', \\
|k,i\rangle |k,j\rangle |\tau_k A_{ij} + (1-\tau_k)\delta_{ij}\rangle, \quad \text{when} \ k = k'.
\end{cases}
\end{equation}

Using quantum registers $|0\rangle_{r_1}, |0\rangle_{r_2}, |0\rangle_{r_3}, |0\rangle_{r_4}$ and auxiliary flag qubits $|0\rangle_{a_1}, |0\rangle_{a_2}$, we can implement oracle $\mathcal{O}_{\boldsymbol{A}}$ by the following procedures.

\begin{itemize}
    \setlength{\itemindent}{4pt}
    \item[\uppercase\expandafter{\romannumeral1.}] Perform $|k,k'\rangle |0\rangle_{a_1} \mapsto |k,k'\rangle |\delta_{k,k'}\rangle_{a_1} $ to check whether $k$ and $k'$ are equal.

    \item[\uppercase\expandafter{\romannumeral2.}] Conditioned on the $a_1$ qubit being 1:
    \begin{itemize}
        \item[(1)] Perform $|i,j\rangle |0\rangle_{r_1} \mapsto |i,j\rangle |A_{ij}\rangle_{r_1}$ using oracle $\mathcal{O}_A$.
        \item[(2)] Perform $|k\rangle |A_{ij}\rangle_{r_1} |0\rangle_{r_2} \mapsto |k\rangle |A_{ij}\rangle_{r_1} |\tau_k A_{ij}\rangle_{r_2}$ using the multiplier.
        \item[(3)] Perform $|i,j\rangle |0\rangle_{a_2} \mapsto |i,j\rangle |\delta_{ij}\rangle_{a_2}$ to check whether $i$ and $j$ are equal.
        \item[(4)] If $a_2$ qubit is also 1, perform $|k\rangle|0\rangle_{r_3} \mapsto |k\rangle |1-\tau_k \rangle_{r_3}$ using the adder.
        \item[(5)] Perform $|\tau_k A_{ij}\rangle_{r_2} |v\rangle_{r_3} |0\rangle_{r_4} \mapsto |\tau_k A_{ij}\rangle_{r_2} |v\rangle_{r_3} |\tau_k A_{ij}+v\rangle_{r_4}$ using the adder, where $v=1-\tau_k$ when the $a_1$ qubit being 1, and $v=0$ when the $a_1$ qubit being 0.
    \end{itemize}
    \item[\uppercase\expandafter{\romannumeral3.}] Uncomputing the '$r_1$', '$r_2$', '$r_3$' registers and the '$a_1$', '$a_2$' qubits, the value stored in the '$r_4$' register represents $\tau_k (A_{ij} - \delta_{ij}) + \delta_{ij}$ for $k = k'$ and 0 for $k \neq k'$.
\end{itemize}

Assuming that $\mathcal{O}_{\boldsymbol{A}}$ returns the element of $\boldsymbol{A}$ with $s$ bits of accuracy, we consider the gate complexity of $\mathcal{O}_{\boldsymbol{A}}$, and show in the following table.

\begin{table}[H]
    \small
    \begin{center}
        \caption{Gate complexity of oracle $\mathcal{O}_{A'}$}
        \renewcommand\arraystretch{2} 
        \begin{tabular}{|c|c|c|c|}
            \hline
            \multicolumn{2}{|c|}{\textbf{Step}} & \textbf{Gate Complexity} & \textbf{Description} \\
            \hline
            \multicolumn{2}{|c|}{\uppercase\expandafter{\romannumeral1}} & $O(\log M)$ & $|k,k'\rangle |0\rangle_{a_1} \mapsto |k,k'\rangle |\delta_{k,k'}\rangle_{a_1} $ needs $O(\log M)$ gates \\
            \hline
            \multirow{5}{*}{\uppercase\expandafter{\romannumeral2}} & (1) & $0$ & No gate complexity, only use one query to oracle $\mathcal{O}_A$ \\
            \cline{2-4}
            \multirow{5}{*}{} & (2) & $O(s^2)$ & $O(s^2)$ gates to obtain the value of $\tau_k A_{ij}$ with $s$ bits of accuracy \\
            \cline{2-4}
            \multirow{5}{*}{} & (3) & $O(\log N)$ & $|i,j\rangle |0\rangle_{a_2} \mapsto |i,j\rangle |\delta_{ij}\rangle_{a_2}$ needs $O(\log N)$ gates \\
            \cline{2-4}
            \multirow{5}{*}{} & (4) & $O(s)$ & $O(s)$ gates to obtain the value of $1-\tau_k$ with $s$ bits of accuracy \\
            \cline{2-4}
            \multirow{5}{*}{} & (5) & $O(s)$ & The same reason as above \\
            \hline
            \multicolumn{2}{|c|}{\uppercase\expandafter{\romannumeral3}} & $O(\log (MN) + s^2)$ & The sum of step \uppercase\expandafter{\romannumeral1} and step \uppercase\expandafter{\romannumeral2} \\
            \hline
        \end{tabular}
    \end{center}
\end{table}

As shown in the Table 1, in step \uppercase\expandafter{\romannumeral1}, the gate complexity is $O(\log M)$ for
$$|k,k'\rangle |0\rangle_{a_1} \mapsto |k,k'\rangle |\delta_{k,k'}\rangle_{a_1}. $$
In step \uppercase\expandafter{\romannumeral2}-(1), it only uses one query to the oracle $\mathcal{O}_A$ and no gate complexity. For simplicity, we assume that the oracle $\mathcal{O}_A$ returns the element with $s$ bits of accuracy and $\tau_k$ $(k \in \{1,2,\cdots,M\})$ has $s$ bits. In step \uppercase\expandafter{\romannumeral2}-(2), it uses $O(s^2)$ gates to obtain the value of $\tau_k A_{ij}$ with $s$ bits of accuracy \cite{tosu20}. In step \uppercase\expandafter{\romannumeral2}-(3), the gate complexity for
$$|i,j\rangle |0\rangle_{a_2} \mapsto |i,j\rangle |\delta_{ij}\rangle_{a_2}$$
is $O(\log N)$. In step \uppercase\expandafter{\romannumeral2}-(4), we can obtain $1-\tau_k$ with $s$ bits of accuracy using $O(s)$ gates \cite{tosu20}. In step \uppercase\expandafter{\romannumeral2}-(5), it also uses $O(s)$ gates. In step \uppercase\expandafter{\romannumeral3}, 'uncomputing' means that repeat previous steps to restore to the initial state $|0\rangle$, so the gate complexity in step \uppercase\expandafter{\romannumeral3} is the sum of the gate complexity of step \uppercase\expandafter{\romannumeral1} and step \uppercase\expandafter{\romannumeral2}.

Then, the total query complexity and gate complexity of $\mathcal{P}_{\boldsymbol{A}}$ are summarized in the following.

\begin{lemma} \label{complexity-oracle}
    Assume that $\mathcal{O}_{\boldsymbol{A}}$ returns the element of $\boldsymbol{A}$ with $s$ bits of accuracy. Then oracle $\mathcal{P}_{\boldsymbol{A}}$ can be performed using $O(1)$ queries to oracle $\mathcal{P}_A$ with gate complexity $O(\log (MN) + s^2)$.
\end{lemma}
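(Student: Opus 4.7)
The plan is to read off the lemma from the explicit construction that precedes it, organized as a cost accounting over the two constituent oracles $\mathcal{O}_{\boldsymbol{A}}$ and $\mathcal{O}_{\boldsymbol{\nu}}$ that together form $\mathcal{P}_{\boldsymbol{A}}$. First I would recall that queries to $\mathcal{P}_A$ means queries to either $O_A$ or $O_\nu$, so it suffices to bound calls to each individually and then bound the non-oracle gate count.

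For $\mathcal{O}_{\boldsymbol{\nu}}$, I would invoke the three-line construction already given: one application of $O_\nu$ followed by $m$ CNOT gates copying the block index $k$ into the row register. This uses a single query to $O_\nu$ and $O(m) = O(\log M)$ elementary gates, which is absorbed by $O(\log(MN))$.

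For $\mathcal{O}_{\boldsymbol{A}}$, I would walk through substeps I, II-(1)--(5), and III, quoting the gate counts tabulated in Table~1. The key points to spell out are: (a) only step II-(1) makes an oracle call, costing one use of $O_A$ and no elementary gates; (b) the two equality tests in steps I and II-(3) cost $O(\log M)$ and $O(\log N)$, giving $O(\log(MN))$; (c) the arithmetic on $s$-bit registers in steps II-(2), II-(4), and II-(5) costs $O(s^2)+O(s)+O(s) = O(s^2)$, since a multiplier on $s$-bit numbers dominates the two additions; (d) step III is an uncomputation that simply reverses steps I and II and therefore at most doubles the gate count and the query count, preserving the asymptotics. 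Summing, $\mathcal{O}_{\boldsymbol{A}}$ uses $O(1)$ queries to $O_A$ and $O(\log(MN) + s^2)$ gates.

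Combining the two bounds, $\mathcal{P}_{\boldsymbol{A}}$ uses $O(1)$ queries to $\mathcal{P}_A$ with gate complexity $O(\log(MN) + s^2)$, as claimed. There is no real obstacle here, since the entire construction has already been laid out; the main thing to be careful about is to state that "uncompute" in step III does not change the asymptotic cost, and to keep track that the controls on the $a_1$ qubit in step II add only $O(1)$ overhead per gate, so the conditioned arithmetic retains its stated complexity.
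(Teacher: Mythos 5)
Your proposal is correct and follows essentially the same route as the paper: the lemma is obtained by summing the costs already tabulated for the constructions of $\mathcal{O}_{\boldsymbol{\nu}}$ (one query to $O_\nu$, $O(\log M)$ CNOTs) and $\mathcal{O}_{\boldsymbol{A}}$ (one query to $O_A$ in step II-(1), $O(\log(MN))$ for the equality tests, $O(s^2)$ for the arithmetic, with uncomputation doubling the count). Your explicit remarks that uncomputation preserves the asymptotics and that the controls add only constant overhead are sensible clarifications but do not change the argument.
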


To apply the HHL algorithm \cite{HHL} or the LCU method \cite{LCU} to $\boldsymbol{A}\boldsymbol{x} = \boldsymbol{b}$, we can use a particular constant $c$, from Lemma \ref{norm-A}, such that $c \geq 2$ to rewrite $\boldsymbol{A}\boldsymbol{x} = \boldsymbol{b}$ as $(\boldsymbol{A}/c)(\boldsymbol{x}c) = \boldsymbol{b}$. Obviously, the corresponding state of the solution of the rewritten linear system is still $|\boldsymbol{x}\rangle$, and the queries to oracle $\mathcal{P}_{\boldsymbol{A}/c}$ are equal to the queries to oracle $\mathcal{P}_{\boldsymbol{A}}$.

Next, consider oracle $\mathcal{P}_{\boldsymbol{b}}$ that generates state $|\boldsymbol{b}\rangle = \sum_i \boldsymbol{b}_i |i\rangle / \| \sum_i \boldsymbol{b}_i |i\rangle \|$, where $\boldsymbol{b}_i$ is the $i$-th element of $\boldsymbol{b}$. That is, consider oracle $\mathcal{P}_{\boldsymbol{b}}$ such that $\mathcal{P}_{\boldsymbol{b}} |0^{nm}\rangle = |\boldsymbol{b}\rangle$. From the definition of $\boldsymbol{b}$, $\mathcal{P}_{\boldsymbol{b}}$ can be represented as
\begin{equation}
\mathcal{P}_{\boldsymbol{b}} = H^{\otimes m} \otimes \mathcal{P}_b,
\end{equation}
which leads to that the query complexity of $\mathcal{P}_{\boldsymbol{b}}$ is equal to oracle $\mathcal{P}_{b}$'s, where $H$ is Hadamard operator. Furthermore, the gate complexity of $\mathcal{P}_{\boldsymbol{b}}$ is $O(\log (MN))$. Therefore, the conditions for solving QLSP $(\boldsymbol{A}/c)(\boldsymbol{x}c) = \boldsymbol{b}$ are satisfied.

\subsection{Total complexity}
In this subsection, we consider the total query and gate complexity of this algorithm. Except the measurement step, we need to analyze both query and gate complexity from Step 1 to Step 4, and derive the following theorem.

\begin{theorem} \label{th-complexity}
    Consider the quantum algorithm described in Section \ref{algorithm}. Assuming that Step 1 outputs a state $|\tilde{\boldsymbol{x}}\rangle$ such that $\| |\boldsymbol{x}\rangle - |\tilde{\boldsymbol{x}}\rangle \| \leq \epsilon'$, where $0 \leq \epsilon' \leq 1/2$. Assuming that unitary $U$ obtained by block-encoding technique in Step 4 is implemented with $O(L)$ primitive gates. Then, to implement the proposed algorithm, we need
    \begin{equation}
    O\left(d\kappa'^2 \log^2 \left(\frac{d\kappa'}{\epsilon'} \right) \right)
    \end{equation}
    queries to oracle $P_A$ and
    \begin{equation}
        O\left(\kappa' \log \left( \frac{d\kappa'}{\epsilon'} \right) \right)
    \end{equation}
    uses of $P_b$ with gate complexity
    \begin{equation}
    O\left( d\kappa'^2 \log^2 \left(\frac{d\kappa'}{\epsilon'}\right) \left[ \log (MN) + \log^\frac{5}{2} \left(\frac{d\kappa'}{\epsilon'}\right)\right] + L \right),
    \end{equation}
    where $\kappa' = 1/(1-\|A-I\|)$ and $d$ is the sparsity of matrix $A$.
\end{theorem}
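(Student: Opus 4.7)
The overall strategy is to bottle up Step~1 as a single invocation of Theorem~\ref{HHL-LCU} applied to a rescaled version of the enlarged system $\boldsymbol{A}\boldsymbol{x}=\boldsymbol{b}$, then add the cost of Steps~2--4, which do not invoke $\mathcal{P}_A$ or $\mathcal{P}_b$ at all and contribute only lower-order gate terms plus the $+L$ from Step~4. The substantive work is to check that, after plugging in the appropriate parameters, the query and gate counts predicted by Theorem~\ref{HHL-LCU} collapse to the advertised formulas.

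First I would rescale: choose a constant $c\ge 2$ (permitted by Lemma~\ref{norm-A}) so that $\|\boldsymbol{A}/c\|\le 1$, and observe that passing from $\mathcal{P}_{\boldsymbol{A}}$ to $\mathcal{P}_{\boldsymbol{A}/c}$ costs only $O(s)$ extra gates per query (a single classical division by $c$). Rescaling does not change $\kappa_{\boldsymbol{A}}$, so by Corollary~\ref{condition-num} the condition number of the system handed to the LCU solver is $O(\kappa')$. The sparsity of $\boldsymbol{A}$ is $O(d)$ since $\boldsymbol{A}$ is block-diagonal with each block a linear combination of $A$ and~$I$, and the ambient dimension is $MN$. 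Substituting $(d,\kappa_A,N,\epsilon)\leftarrow (O(d),O(\kappa'),MN,\epsilon')$ in Theorem~\ref{HHL-LCU} yields, after absorbing constants into the $O(\cdot)$, the query count $O(d\kappa'^2\log^2(d\kappa'/\epsilon'))$ to $\mathcal{P}_{\boldsymbol{A}}$, the use count $O(\kappa'\log(d\kappa'/\epsilon'))$ of $\mathcal{P}_{\boldsymbol{b}}$, and the Step~1 gate count $O\bigl(d\kappa'^2\log^2(d\kappa'/\epsilon')[\log(MN)+\log^{5/2}(d\kappa'/\epsilon')]\bigr)$.

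Next I would pass from the block-level oracles back to the original $\mathcal{P}_A$ and $\mathcal{P}_b$. By Lemma~\ref{complexity-oracle} each $\mathcal{P}_{\boldsymbol{A}}$ call invokes $\mathcal{P}_A$ only $O(1)$ times and adds $O(\log(MN)+s^2)$ gates, which (assuming $s=O(\log(MN)+\log(d\kappa'/\epsilon'))$, the precision needed to respect $\epsilon'$) is absorbed into the bracket $[\log(MN)+\log^{5/2}(d\kappa'/\epsilon')]$ already present in the Step~1 gate count. The oracle for $\boldsymbol{b}$ factors as $\mathcal{P}_{\boldsymbol{b}}=H^{\otimes m}\otimes\mathcal{P}_b$, so each use of $\mathcal{P}_{\boldsymbol{b}}$ is exactly one use of $\mathcal{P}_b$ plus $O(\log M)$ Hadamards. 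Hence the stated $\mathcal{P}_A$ query bound and $\mathcal{P}_b$ use bound follow verbatim.

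Finally I would tally Steps~2--4. Step~2 is a $j$-indexed controlled rotation by classically precomputable angles $\arcsin(C_M\omega_j)$, implementable with $O(\mathrm{poly}(\log M))$ gates given a classical table of the $\omega_j$'s; Step~3 is $m=\log M$ Hadamards; Step~4 applies $I_M\otimes U\otimes I_2$ once, costing $O(L)$ primitive gates by hypothesis. Summing Step~1 through Step~4 gives the advertised total gate complexity, with the explicit $+L$ arising from Step~4 and all other Steps~2--3 contributions dominated. The main obstacle is really just the bookkeeping in this last sum: verifying that the extra $O(\log(MN)+s^2)$ per-call overhead from the oracle simulation in Lemma~\ref{complexity-oracle}, together with the $O(\log M)$ cost of $H^{\otimes m}$ inside $\mathcal{P}_{\boldsymbol{b}}$ and the $O(\mathrm{poly}(\log M))$ cost of the Step~2 rotation, are all strictly dominated by either the LCU gate count or the $+L$ term, so that no additional summand needs to appear in the final bound.
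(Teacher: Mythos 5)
Your proposal is correct and follows essentially the same route as the paper: invoke Theorem~\ref{HHL-LCU} on the rescaled system $(\boldsymbol{A}/c)(\boldsymbol{x}c)=\boldsymbol{b}$ with the condition-number bound from Corollary~\ref{condition-num} and the oracle reduction from Lemma~\ref{complexity-oracle}, then add the subdominant gate costs of Steps~2--3 and the $O(L)$ cost of Step~4. You are in fact slightly more careful than the paper (e.g.\ costing the Step~2 controlled rotation as $O(\mathrm{poly}(\log M))$ rather than $O(1)$ and tracking the $s$-bit precision overhead), but these refinements are all absorbed into the stated bounds, so the argument is the same.
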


\begin{proof}
    Combined with Lemma \ref{complexity-oracle}, Corollary \ref{condition-num} and Theorem \ref{HHL-LCU}, we can get that Step 1 needs $O\left(d\kappa'^2 \log^2 \left(\frac{d\kappa'}{\epsilon'} \right) \right)$ queries to oracle $ P_A$ and $ O\left(\kappa' \log \left( \frac{d\kappa'}{\epsilon'} \right) \right)$ uses of $P_b $ with gate complexity
    $ O\left( d\kappa'^2 \log^2 \left( \frac{d\kappa'}{\epsilon'} \right) \left[ \log (MN) + \log^\frac{5}{2} \left( \frac{d\kappa'}{\epsilon'} \right)\right] \right) $. Step 2 is only gate complexity $O(1)$. It is obviously that the gate complexity of Step 3 is $O(m) = O(\log M)$. The gate complexity of Step 4 is $O(L)$. Thus, the theorem follows.
\end{proof}

In order to increase the success probability, we always use amplitude amplification technique, which needs to repeat Step 1 to Step 4 of the algorithm $O(1/\sqrt{\tilde{p}})$ times \cite{amp}, before the measurement.

\section{Conclusion}
In this paper, for the problem of computing $ \log(A) b$, we
proposed a quantum algorithm to compute the state $|f\rangle =
\log(A) |b\rangle / \| \log(A) |b\rangle \|$. The method is
motivated by using quadrature rules on the integral representation
of $\log(A)$, one of the classical methods of computing $\log(A)$.
In this method, we choose the Gauss-Legendre quadrature rule, while
the trapezoidal rule was applied in \cite{tosu20}. We use the fact
that the $M$-point Gauss Legendre quadrature is just the Pad\'{e}
approximation to the matrix logarithm \cite{gl-pade}. And this is
used for error estimation. The complex function must be analytical
on the disk with center $0$ in \cite{tosu20}, while we do not have
this restriction. When we prepare the manuscript, we notice the work
\cite{tosu21}. Tow popular techniques, LCU method and
block-encoding, can be used as subroutines. The block-encoding
framework can also be applied to matrix logarithm. Precisely
speaking, using $A$'s block-encoding $U_A$, we construct the
block-encoding of $A_j=\tau_j A + (1-\tau_j)I$. Based on the
block-encoding of matrix inverse \cite{GSLW19,tosu21}, we can
implement the block-encoding of $A_j^{-1}$, and then of
$h_M(A)=\Sigma_j \omega_j A_j^{-1}$.

\end{document}